\definecolor{dark-blue}{rgb}{0,0,0.6}
\definecolor{Purple}{rgb}{0.2,0,0.25}
\newtheorem{thm}{Theorem}[section]
\newtheorem{cor}[thm]{Corollary}
\newtheorem{lem}[thm]{Lemma}
\newtheorem{defin}[thm]{Definition}
\theoremstyle{definition}
\newtheorem{exmp}[thm]{Example}
\newtheorem{counterexample}[thm]{Counterexample}
\newtheorem{remark}[thm]{Remark}
\newcommand{\wt}{\widetilde}
\newcommand{\dom}{\textnormal{dom}}
\newcommand{\Int}{\textnormal{Int}}
\newcommand{\Span}{\textnormal{span}}
\newcommand{\R}{\mathbb{R}}
\newcommand{\C}{\mathbb{C}}
\newcommand{\N}{\mathbb{N}}
\newcommand{\M}{\mathscr{M}}
\newcommand{\scS}{\mathscr{S}}
\newcommand{\SUP}{\textnormal{SUP}}
\newcommand{\INF}{\textnormal{INF}}
\newcommand{\bref}[1]{\textbf{\ref{#1}}} 
\newcommand{\beqref}[1]{\textbf{(\ref{#1})}} 
\subjclass[2010]{90C31,	49K40, 90C26, 54E99, 46A19, 90C59, 54C30, 15A06, 15A09}
\keywords{Distance function, extreme values, extended generalized inverse (EGI), Hausdorff distance, Hoffman's Lemma, Lipschitz continuity, optimal value function, optimal values,  pseudo-distance space, stability, uniform continuity}
\begin{document}
\date{March 29, 2020}

\title[Stability of the optimal values]{Stability of the optimal values under small perturbations of the constraint set}

\author{Daniel Reem}
\address{Daniel Reem, Department of Mathematics, The Technion -- Israel Institute of Technology, 3200003 Haifa, Israel.} 
\email{dream@technion.ac.il}
\author{Simeon Reich}
\address{Simeon Reich, Department of Mathematics, The Technion -- Israel Institute of Technology, 3200003 Haifa, Israel.} 
\email{sreich@technion.ac.il}
\author{Alvaro De Pierro}
\address{Alvaro De Pierro, CNP$\textnormal{q}$, Brazil}
\email{depierro.alvaro@gmail.com}

\maketitle

\begin{abstract}
This paper discusses a general and useful stability principle which, roughly speaking, says that given a uniformly continuous function defined on an arbitrary metric space, if the function is bounded on the constraint set and we slightly change this set, then its optimal (extreme) values on this set vary slightly, and, moreover, they are actually uniformly continuous as a function of the constraint set. The principle holds in a much more general setting than a metric space, since the distance function may be asymmetric, may attain negative and even infinite values, and so on. This stability  principle leads to applications in parametric optimization, mixed linear-nonlinear programming and analysis of Lipschitz continuity, as well as to a general scheme for tackling a wide class of non-convex and non-smooth optimization problems. We also discuss the issue of stability when the objective function is merely continuous. As a byproduct of our analysis we obtain a significant generalization of the concept of a generalized inverse of a linear operator and a very general variant of the so-called ``Hoffman's Lemma''. 
\end{abstract}

\section{Introduction}\label{sec:Introduction}
\subsection{Background:} The issue of stability in optimization problems has both theoretical and practical importance. More precisely, suppose that we are given an optimization problem consisting of a space $X$, an objective function (a target function) $f:X\to [-\infty,\infty]$, a constraint set $\emptyset \neq A\subseteq X$, and various parameters which influence the problem (for example, the parameters may influence the constraint set $A$, namely $A=A(t)$ for some fixed parameter $t$ in a parameter space; another example: parameters which define $f$, namely there is some function $g$ of two variables such that $f(x)=g(x,t)$, $x\in A$, where  $t$ is a parameter in a parameter space; in both cases one minimizes with respect to $x$). A natural question is what happens to the optimal values of the objective function $f$, as well as to its sets of minimizers and maximizers, when we slightly perturb some of the elements involved in the formulation of the problem. If the optimal values (and perhaps also the sets of minimizers and maximizers) change slightly as a result of the change in the elements of the problem, then the problem exhibits a certain kind of stability. Of course, the type of stability depends on the way in which we measure all the pertinent changes. 

Stability is a desired property which ensures that if one is able to control various types of imprecision which are inherent in many optimization problems (such as noise, inexact measurements, representation errors due to real parameters/expressions which are approximated by a finite decimal representation of them, and so on), then the resulting optimal values will not change a lot. 

There is a very long chain of research works that contain results related to the issue of stability in optimization problems, as can be seen by looking at the following (far from being exhaustive) list of references and the references therein: \cite{AnYen2018jour, BankGuddatKlatteKummerTammer1983book, Berge1963book, BonnansShapiro2000book, Borwein1986jour, DaniilidisGobernaLopezLucchetti2015jour, DantzigFolkmanShapiro1967jour, DempeMehlitz2015jour, DinhGobernaLopez2012jour, Dolecki1978jour, Fiacco1983book, FiaccoIshizuka1990jour, KlatteKummer1985incol, LeePham2016jour, LeeTamYen2012jour, LucVolle2018jour, LudererMinchenkoSatsura2002book,  MordukhovichNamYen2009jour, MoussaouiSeeger1994jour, PallaschkeRolewicz1997book, Robinson1987jour, Rockafellar1970book, Rockafellar1982jour, ShanHanHuang2015jour, TamNghi2018jour, Thibault1991jour, WalkupWets1969jour, Wets2003jour, Yen1997jour, ZlobecGardnerBen-Israel1982incol}. These works consider a variety of optimization problems, in various settings (often Euclidean spaces, but sometimes also in infinite-dimensional Banach spaces or other spaces). They establish certain properties related to either the optimal values of the objective function or its sets of minimizers and maximizers, under suitable assumptions on the objective function and the structure of the set of constraints, such as convexity, linear or piecewise linear structure, quadratic structure, compactness, and so on. Frequently the established properties are semicontinuity (lower or upper) and closedness; occasionally (usually in finite-dimensional Euclidean spaces)  stronger properties are established, such as continuity, Lipschitz continuity, convexity,  and differentiability. The aforementioned  results are interpreted as being stability results.

In the above-mentioned cases the perturbations occur in the space of parameters which define either the set of constraints or the objective function. This is one of the reasons why the corresponding research field to which these works belong is traditionally called ``parametric optimization'', although they can also be regarded as belonging to ``variational analysis'' \cite{Mordukhovich2006Ibook, RockafellarWets1998book}. The parameters themselves are frequently real numbers or vectors in a linear space endowed with a norm or a topology, but sometimes they belong to a different entity. For instance, in \cite{Artstein1994jour, Romisch2003incol} the  parameter is a probability measure; in \cite{RobinsonWets1987jour} the parameter is a pair consisting of a point in a topological space and a probability measure; in  \cite{Vogel1994jour, Vogel2005jour} the parameter is a sample belonging to a probability space and one discusses various probabilistic types of convergence, as well as convergence in the sense of inner or outer limits (limits which are related to the Kuratowski-Painlev\'e notion of convergence). For related but somewhat different  types of stability, see, for example, \cite{ArtsteinWets1994jour, AttouchWets1993jour-a, DinhGobernaLopez2010jour, Pinelis2019jour, PhuYen2001jour, ReemReich2018jour, Tihonov1966jour, Zaslavski2010book, Zaslavski2013book}. 

\subsection{Our stability principle:} 
In this paper we discuss a different type of stability, which, in our opinion, is not less natural than other types of stability discussed in the literature. More precisely, after some preliminaries (Section \bref{sec:Preliminaries}) we present in Section \bref{sec:StabilityPrinciple} our stability  principle which, roughly speaking, says the following: given a uniformly continuous function defined on an arbitrary metric space, if the function is bounded on the constraint set and we slightly change this set, where the change is measured with respect to the Hausdorff distance, then its optimal (extreme) values on this set vary slightly.

One can think of the above-mentioned stability principle as a continuous dependence result, where  the resulting continuous functions are the infimum and the supremum of the objective function over the constraint set. This is how we actually formulate the stability principle (Theorem \bref{thm:HausdorffSupInf}). The advantage of  this point of view is that it enables one to establish other properties of the infimum and supremum functions, such as their uniform continuity and, sometimes, their Lipschitz continuity. Such properties may not have a very intuitive interpretation. Theorem \bref{thm:HausdorffSupInf}, which is new to the best of our knowledge (but see Remark \bref{rem:FurtherCommentsStability}\beqref{rem:FurtherComparisonLiterature} below for some predecessors and other related results), has a  potential to extend, and perhaps to re-prove, some of the stability results mentioned earlier (for an illustration, see Section \bref{sec:LinearNonlinear}). It seems to be especially promising for finite-dimensional normed spaces, manifolds, and the like, since, as is well known, a continuous function defined on a closed and bounded (hence compact) subset of such spaces is uniformly continuous on this subset. Another relevant example is continuous linear functionals on arbitrary normed spaces (since, as is well known, they are automatically Lipschitz continuous). 

Interestingly, our stability result holds in a much more general setting than metric spaces, since we assume essentially nothing on the distance function: in particular, it can be negative, infinite, asymmetric, and so on; this simple observation is, in fact, a key contribution. As a result, our ``topology-free'' stability principle holds a promise to be relevant to many scenarios in the literature in which the distance function is not a metric, as we illustrate below and in Section \bref{sec:LinearNonlinear}. 

First example: the distance function is the well-known Bregman divergence/distance \cite{Bregman1967jour} (see also \cite{ReemReichDePierro2019jour(BregmanEntropy)} for a recent semi-survey and an extensive re-examination) or one of its generalizations \cite{Reem2012incol};  second example: distortion measures, divergences, and distance functions used in information theory, data analysis, data processing, machine learning and the like \cite{AnthonyRatsaby2018jour, Basseville2013jour,  Gray2013book, Kogan2007book, Koknar-TezelLatecki2009inproc}; third example: Finsler quasi-metric spaces \cite{Tamassy2008jour}; fourth example:  distance functions which appear in fixed point theory  \cite{KadelburgRadenovic2014jour, Khamsi2015jour, KirkShahzad2014book, ReemReich2009jour}; fifth example: distances induced by quasi-norms \cite{Kalton2003incol}; sixth example: distances induced by asymmetric norms (such as asymmetric Minkowski functionals) \cite{Cobzas2013book, Minkowski1911}; seventh example: distances induced by star bodies  \cite{Cassels1997, GruberLek, Mahler1946, Mordell1945}; eighth example: numerous distance functions  which appear in many scientific and technological areas \cite{DezaDeza2016book}; ninth example: the setting is the energy levels of an atom, namely the space is $X:=\N$, and $d(x,y)$ denotes the amount of energy that one needs to invest  in order to bring an electron from energy level $x\in X$ to energy level $y\in X$ (in this case $d(x,y)<0$ means that one gains energy when the electron jumps from energy level $x$ to energy level $y$).

The assumptions made in the formulation of Theorem \bref{thm:HausdorffSupInf} are essential, as we illustrate in Section \bref{sec:StabilityPrinciple}  by using some counterexamples. Despite this, we are able to formulate a counterpart to Theorem \bref{thm:HausdorffSupInf}, namely Theorem \bref{thm:HausdorffSupInfInfinite}, in which the uniform continuity can be replaced by mere continuity, provided the optimal values are infinite.  

\subsection{Additional major contributions: } 
Sections \bref{sec:ParamOpt}--\bref{sec:NonconvexNonsmooth} of the paper present additional major contributions, among them several applications of Theorem \bref{thm:HausdorffSupInf}, as well as new and seemingly unrelated concepts and results. One application (Section \bref{sec:ParamOpt}) is in parametric optimization, where we prove a general continuity result related to the so-called ``optimal value function''. A second application appears in Section \bref{sec:LinearNonlinear}, where we apply the result of Section \bref{sec:ParamOpt} to mixed linear-nonlinear programming problems. As a byproduct of our analysis we obtain a significant generalization of the concept of a generalized inverse of a linear operator and a very general variant of the so-called ``Hoffman's Lemma''. A third application is presented in Section \bref{sec:ContinuityLipCont}, where we obtain a sequence of Lipschitz constants related to certain functions. This latter application has recently been used in \cite{ReemReichDePierro2019teprog} in the context of estimating the rate of convergence of a certain first order proximal gradient method.  A fourth application  (Section \bref{sec:NonconvexNonsmooth}) is a general scheme for tackling a wide class of non-convex and non-smooth optimization problems.

\section{Preliminaries}\label{sec:Preliminaries}
Our setting is an arbitrary nonempty set $X$ endowed with an arbitrary function $d:X^2\to[-\infty,\infty]$. We refer to the set $X$ as the ``space'', to $d$ as the ``distance function'' or the ``pseudo-distance'', and to $(X,d)$ as the ``pseudo-distance space''. Note that $d$ is, in general, not a metric, for example because it can attain negative and even infinite values, it may be asymmetric, and so on, and hence we use the notion ``pseudo-distance space''. We denote by $2^X$ the set of all subsets of $X$.  Given $\psi:X\to[-\infty,\infty]$, its effective domain is the set $\dom(\psi):=\{x\in X: \psi(x)\in\R\}$. Given an arbitrary function $f:X\to \R$, we denote by $\SUP_f: 2^X\to [-\infty,\infty]$ and $\INF_f:2^X\to [-\infty,\infty]$ the supremum and infimum operators, respectively, associated with $f$. These operators are naturally defined by $\SUP_f(A):=\sup\{f(a): a\in A\}$ and $\INF_f(A):=\inf\{f(a):  a\in A\}$ for each $A\in 2^X$, respectively, where we use the usual convention that $\sup \emptyset:=-\infty$ and $\inf \emptyset:=\infty$. We have $\dom(\SUP_f)\neq\emptyset$ and $\dom(\INF_f)\neq\emptyset$ because they contain all the singletons $\{x\}$, $x\in X$, by the assumption that $f(x)\in\R$ for each $x\in X$.

Given a point $x\in X$ and a nonempty subset $A$ of $X$, the distance between them is defined by $d(x,A):=\inf\{d(x,a): a\in A\}$. Given $r>0$, we denote $B[A,r]:=\{x\in X: d(x,A)\leq r\}$ and refer to this set as the ``closed ball of radius $r$ around $A$'' (this ``ball'' always contains $A$ if $d$ is a metric, but in general it may be empty). Given another $\emptyset\neq A'\subseteq X$, we denote the standard distance between $A$ and $A'$ by 
\begin{equation}\label{eq:d(A,A')}
d(A,A'):=\inf\{d(a,a'): a\in A, a'\in A'\}. 
\end{equation}
We define the asymmetric Hausdorff distance between $A$ and $A'$ by 
\begin{equation}\label{eq:D_asyH}
D_{asyH}(A,A'):=\sup\{d(a,A'): a\in A\},
\end{equation}
and the (symmetric) Hausdorff distance (also called the ``Pompeiu-Hausdorff distance'') between $A$ and $A'$ by  
\begin{equation}\label{eq:D_H}
D_H(A,A'):=\max\{D_{asyH}(A,A'), D_{asyH}(A',A)\}. 
\end{equation}
It is a simple consequence of \beqref{eq:d(A,A')}, \beqref{eq:D_asyH} and \beqref{eq:D_H} that $d(A,A')$, $D_{asyH}(A,A')$ and $D_H(A,A')$ belong to $[-\infty,\infty]$ and $d(A,A')\leq D_{asyH}(A,A')\leq D_H(A,A')$. 

Roughly speaking, the Hausdorff distance quantifies the ``degree of  similarity'' between $A$ and $A'$. More precisely, if (as it happens, for example, in the real world where $d$ is the usual Euclidean distance) there is a resolution parameter $r>0$, namely a positive number $r$ with the property that we cannot distinguish, by using distance measurements, between two points $x$ and $y$ in $X$ if $d(x,y)<r$, and if $D_H(A,A')<r$, then we cannot distinguish between $A$ and $A'$ by using distance measurements. Indeed, the inequality $D_H(A,A')<r$ implies that for each $a\in A$, there is $a'\in A'$ such that $d(a,a')<r$ (namely, no point in $A$ can be distinguished from a point in $A'$) and for each $a'\in A'$,  there is $a\in A$ such that $d(a',a)<r$ (namely, no point in $A'$ can be distinguished from a point in $A$). We note that in general (if, say, $d(x,y)=\infty$ for all $(x,y)\in X^2$), given $\delta>0$ and $\emptyset\neq A\subseteq X$, there may not be any $\emptyset\neq A'\subseteq X$ such that $D_H(A,A')<\delta$; if, however, $d$ is a metric, then we always have $0=D_H(A,A)<\delta$.  

In the sequel we consider various continuity notions of real functions. These notions are word for word as in the usual (metric) case, although in the general case their interpretation is less intuitive. More precisely, let $f:X\to\R$ and $\emptyset\neq A\subseteq X$ be given. We say that $f$ is continuous at $x\in X$ if for every $\epsilon>0$, there exists $\delta>0$ such that for each $y\in X$ satisfying $d(x,y)<\delta$, we have $|f(x)-f(y)|<\epsilon$. We say that $f$ is continuous on $A$ if it is continuous at each $x\in A$. We say that $f$ is uniformly continuous on $A$ if for each $\epsilon>0$, there exists $\delta>0$ such that for all $(x,y)\in A^2$ satisfying $d(x,y)<\delta$, we have $|f(x)-f(y)|<\epsilon$.  Given another pseudo-distance space $(I,d_I)$ and $\psi:X\to I$, we say that $\psi$ is Lipschitz continuous on $A\subseteq X$ with a constant $0\neq\Lambda\in \R$ (or, briefly, that $\psi$ is $\Lambda$-Lipschitz continuous on $A$) if $d_I(\psi(x),\psi(y)\leq\Lambda d(x,y)$ for all $(x,y)\in A^2$ (here and elsewhere, if $d(x,y)\in\R$ for all $(x,y)\in A^2$, then the case $\Lambda=0$ is legitimate too). 

Given a family $(A_t)_{t\in I}$ of nonempty subsets of $X$ and a point $t_0\in I$, we say that $\limsup_{t\to t_0}D_{H,X}(A_{t_0},A_t)\leq 0$ if for every $\epsilon>0$, there exists $\delta>0$ such for all $t\in I$ which satisfy $d_I(t_0,t)<\delta$, we have $D_{H,X}(A_{t_0},A_t)<\epsilon$; here, for the sake of clarity, we denoted $D_H$ by $D_{H,X}$, to emphasize that  it is induced by $d$ and not by $d_I$. 

We finish this section with a few remarks. First, the fact that the pseudo-distance $d$ may be asymmetric, means that the above-mentioned types of continuity, as well as other notions (such as the distance $d(x,A)$ between some $x\in X$ and $\emptyset\neq A\subseteq X$, as well as the ``ball'' $B[A,r]$, $r>0$) will be different if, in the corresponding definitions, we change the order in which we measure the distance between two given points. For example, in the definition of continuity, instead of assuming the inequality $d(x,y)<\delta$ we could have assumed that $d(y,x)<\delta$. Nevertheless, in either case the relevant proofs follow  essentially the same reasoning. Second, we could extend our setting and some of the results even further (for example, by assuming that $d:X^2\to L$, where $L\neq\emptyset$ is an arbitrary linearly/totally/simply ordered set), but we refrain from doing this here.

\section{Stability of the optimal values}\label{sec:StabilityPrinciple}
In this section we present the stability principle in both its finite and infinite versions. Despite the general setting, the proofs are rather simple. We present them for the sake of completeness and in order to  eliminate any suspicion that possibly a few subtle points have been missed due to the general setting. The counterexamples which come afterward show that the assumptions imposed in the formulation of the principle are essential. At the end of the section we make a few relevant comments regarding this stability principle.

\begin{thm}\label{thm:HausdorffSupInf}{\bf (Stability of the optimal values: the finite case)}
Let $(X,d)$ be a pseudo-distance space and let $f:X\to\R$ be given. If $f$ is uniformly continuous on $(X,d)$ in the sense of Section \bref{sec:Preliminaries}, then $\SUP_f$ is uniformly continuous on the pseudo-distance space $(\dom(\SUP_f),D_H)$, and $\INF_f$ is uniformly continuous on the pseudo-distance space $(\dom(\INF_f),D_H)$. If $f$ is $\Lambda$-Lipschitz continuous for some $\Lambda>0$ (in the sense of Section \bref{sec:Preliminaries}), then $\SUP_f$ is $\Lambda$-Lipschitz continuous on $(\dom(\SUP_f),D_H)$ and $\INF_f$ is $\Lambda$-Lipschitz continuous on $(\dom(\INF_f),D_H)$. 
\end{thm}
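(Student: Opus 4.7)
The plan is to prove the claim first for $\SUP_f$ by the classical ``nearest point in the other set'' argument, adapted to the Hausdorff distance, and then deduce the analogous statement for $\INF_f$ via the identity $\INF_f(A)=-\SUP_{-f}(A)$, observing that both uniform continuity and $\Lambda$-Lipschitz continuity of $f$ are preserved under negation and that $\dom(\INF_f)=\dom(\SUP_{-f})$.

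I would begin by fixing $\epsilon>0$ and letting $\delta>0$ be a modulus of uniform continuity of $f$ on $X$. Take any $A,A'\in\dom(\SUP_f)$ with $D_H(A,A')<\delta$. For each $a\in A$, the chain
\[
d(a,A')=\inf_{a'\in A'}d(a,a')\le D_{asyH}(A,A')\le D_H(A,A')<\delta
\]
lets me pick (by the definition of infimum) some $a'\in A'$ with $d(a,a')<\delta$; the uniform continuity hypothesis then gives $f(a)<f(a')+\epsilon\le\SUP_f(A')+\epsilon$. Taking the supremum over $a\in A$ yields $\SUP_f(A)\le\SUP_f(A')+\epsilon$, and exchanging the roles of $A$ and $A'$ (using $D_{asyH}(A',A)\le D_H(A,A')<\delta$) gives the reverse inequality. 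Hence $|\SUP_f(A)-\SUP_f(A')|\le\epsilon$, which is the required uniform continuity of $\SUP_f$ on $(\dom(\SUP_f),D_H)$. For the Lipschitz statement I would sharpen the selection step: given $\eta>0$, choose $a'\in A'$ with $d(a,a')<d(a,A')+\eta\le D_H(A,A')+\eta$, so that $\Lambda$-Lipschitz continuity gives $f(a)-f(a')\le\Lambda\,d(a,a')<\Lambda D_H(A,A')+\Lambda\eta$; taking the supremum over $a\in A$ and letting $\eta$ tend to $0$ produces $\SUP_f(A)-\SUP_f(A')\le\Lambda D_H(A,A')$, and the symmetric bound follows the same way.

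I do not expect any serious obstacle, since the argument is essentially the standard one. The only point that requires vigilance is that the proof continues to work under the extremely weak hypotheses on $d$: it may be asymmetric, it may take negative or infinite values, and the ``ball'' language is not available. These are handled automatically because the selection of $a'$ uses only the bound $d(a,A')<\delta$ (respectively $d(a,A')\le D_H(A,A')<\infty$ in the Lipschitz case), because the two directions of the final inequality are obtained by separately invoking $D_{asyH}(A,A')$ and $D_{asyH}(A',A)$ (so symmetry of $d$ is not needed), and because the case $D_H(A,A')=\infty$ makes the Lipschitz estimate trivially true. Thus the generality of the pseudo-distance enters nowhere in a substantive way, which is precisely what makes the stability principle so broadly applicable.
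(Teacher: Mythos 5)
Your proof is correct and follows essentially the same route as the paper's: for each point of one set select a nearby point of the other via $d(a,A')\le D_H(A,A')$, apply uniform continuity (respectively, Lipschitz continuity with an $\eta$-slack that is sent to $0$), argue symmetrically via $D_{asyH}(A',A)$, and reduce $\INF_f$ to $\SUP_{-f}$. The only point left implicit is that in the Lipschitz case $D_H(A,A')=-\infty$ cannot occur (your selection of $a'$ with $d(a,a')<d(a,A')+\eta$ tacitly needs $d(a,A')>-\infty$); this is harmless because $|f(x)-f(y)|\le\Lambda d(x,y)$ with $\Lambda>0$ forces $d\ge 0$, an observation the paper records explicitly.
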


\begin{proof}
We only consider the assertions regarding $\SUP_f$, because the assertions regarding $\INF_f$ can be proved in a similar manner, or can be deduced from the  assertions regarding $\SUP_f$ by working with $-f$ instead of $f$. 

Fix some arbitrary $\epsilon>0$. Assume first that $f$ is uniformly continuous. Then there is $\delta>0$ such that for all $(x,y)\in X^2$ which satisfy $d(x,y)<\delta$, we have $|f(x)-f(y)|<0.5\epsilon$. We first prove that $\SUP_f$ is uniformly continuous on its effective domain, where we actually show that the $\delta$ from the previous lines can be associated, in the definition of the uniform continuity of $\SUP_f$, with the given $\epsilon$. Let $A, A'\in\dom(\SUP_f)$ be arbitrary such that $D_H(A,A')<\delta$ (if no such sets exist, then the proof is complete, vacuously). Since $\SUP_f(A)\in\R$, there is some $a\in A$ such that 
\begin{equation}\label{eq:s_A-epsilon<f(a)}
\SUP_f(A)-0.5\epsilon<f(a).
\end{equation}
From \beqref{eq:D_H} we have $d(a,A')\leq D_H(A,A')<\delta$. Hence there is  some $a'\in A'$ such that $d(a,a')<\delta$. The uniform continuity of $f$ and \beqref{eq:s_A-epsilon<f(a)} imply that $\SUP_f(A)-0.5\epsilon<f(a)<f(a')+0.5\epsilon$. Since $-\infty<f(a')\leq \SUP_f(A')$, we obtain $\SUP_f(A)-\SUP_f(A')<\epsilon$. By interchanging the roles of $A$ and $A'$ we obtain, using an argument similar to the above one, that $-\epsilon<\SUP_f(A)-\SUP_f(A')$. 

Now assume that $f$ is $\Lambda$-Lipschitz continuous for some $\Lambda>0$. Fix some arbitrary  $A, A'\in \dom(\SUP_f)$. We need to show that $|\SUP_f(A)-\SUP_f(A')|\leq \Lambda D_H(A,A')$. The assertion is obvious if $D_H(A,A')=\infty$, and so from now on we may assume that $D_H(A,A')<\infty$. We observe that the case $D_H(A,A')=-\infty$ is impossible since the fact   that $f$ is $\Lambda$-Lipschitz continuous implies that $0\leq |f(x)-f(y)|\leq \Lambda d(x,y)$ for all $(x,y)\in X^2$, and so the assumption that and $\Lambda>0$ implies that $d$ and hence $D_H$ are nonnegative. Thus we can actually assume that $D_H(A,A')\in [0,\infty)$. The definition of $\SUP_f(A)$  implies that \beqref{eq:s_A-epsilon<f(a)} holds for some $a\in A$. Since from \beqref{eq:D_H} we have $d(a,A')\leq D_H(A,A')<D_H(A,A')+0.5\epsilon$, there is some $a'\in A'$ such that $d(a,a')<D_H(A,A')+0.5\epsilon$. We conclude from \beqref{eq:s_A-epsilon<f(a)} and  the $\Lambda$-Lipschitz  continuity of $f$ that $\SUP_f(A)-0.5\epsilon<f(a)\leq f(a')+\Lambda d(a,a')\leq f(a')+\Lambda D_H(A,A')+0.5\Lambda\epsilon$. As obviously $f(a')\leq \SUP_f(A')$, it follows that $\SUP_f(A)-\SUP_f(A')<\Lambda D_H(A,A')+0.5\epsilon(1+\Lambda)$. Since $\epsilon$ can be arbitrarily small, we have $\SUP_f(A)-\SUP_f(A')\leq\Lambda D_H(A,A')$. By interchanging the roles of $A$ and $A'$ we obtain, using an argument similar to the above one, that $-\Lambda D_H(A,A')\leq\SUP_f(A)-\SUP_f(A')$. 
\end{proof}

\begin{thm}\label{thm:HausdorffSupInfInfinite}{\bf (Stability of the optimal values: the infinite case)}
Let $(X,d)$ be a pseudo-distance space. Suppose that $f:X\to\R$ is continuous in the sense of Section \bref{sec:Preliminaries}. Given an arbitrary $\emptyset \neq A\subseteq X$, if $\SUP_f(A)=\infty$, then for all $\mu\in\R$, there exists $\delta>0$ such that for each nonempty subset $A'\subseteq X$ satisfying  $D_{asyH}(A,A')<\delta$ (in particular, for each  $\emptyset\neq A'\subseteq X$ satisfying  $D_H(A,A')<\delta$), the following inequality holds:
\begin{equation}\label{eq:supM}
\mu<\SUP_f(A').
\end{equation}
 Similarly, if $\INF_f(A)=-\infty$, then for all $\mu\in\R$, there exists $\delta>0$ such that for each  nonempty subset $A'\subseteq X$ satisfying  $D_{asyH}(A,A')<\delta$ (in particular, for each  $\emptyset\neq A'\subseteq X$ satisfying  $D_H(A,A')<\delta$), one has
\begin{equation}\label{eq:infM}
\INF_f(A')<\mu.
\end{equation}
\end{thm}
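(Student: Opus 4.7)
The plan is to exploit a crucial asymmetry between the two theorems: in Theorem \bref{thm:HausdorffSupInf} we must compare $\SUP_f(A)$ and $\SUP_f(A')$ in both directions, which forces us to control $f$ on all of $A$ and $A'$ simultaneously (hence uniform continuity), whereas here we only need to produce a \emph{single} large value of $f$ on $A'$. This allows us to first choose a witness point $a\in A$ with $f(a)$ as large as we like, and then invoke continuity of $f$ at that one point. Because the condition on $A'$ is phrased via $D_{asyH}(A,A')$, any point of $A$ must have a nearby point in $A'$; this is the one-sided control that matches perfectly with a one-sided witness argument.

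For the supremum case, given $\mu\in\R$, I would proceed as follows. Since $\SUP_f(A)=\infty$, there exists $a\in A$ with $f(a)>\mu+1$. By continuity of $f$ at $a$, in the sense defined in Section \bref{sec:Preliminaries}, there exists $\delta>0$ such that every $y\in X$ with $d(a,y)<\delta$ satisfies $|f(a)-f(y)|<1$, and hence $f(y)>f(a)-1>\mu$. Now let $\emptyset\neq A'\subseteq X$ satisfy $D_{asyH}(A,A')<\delta$. By the definition \beqref{eq:D_asyH}, we have $d(a,A')\leq D_{asyH}(A,A')<\delta$, so by the definition of $d(a,A')$ as an infimum there exists $a'\in A'$ with $d(a,a')<\delta$. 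Then $f(a')>\mu$, and since $\SUP_f(A')\geq f(a')$, inequality \beqref{eq:supM} follows. The case where $D_H(A,A')<\delta$ is immediate since $D_{asyH}(A,A')\leq D_H(A,A')$ by the inequality chain noted after \beqref{eq:D_H}.

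The infimum case is perfectly symmetric: pick $a\in A$ with $f(a)<\mu-1$, use continuity at $a$ to produce the same $\delta$, find $a'\in A'$ with $d(a,a')<\delta$, conclude $f(a')<\mu$, and note $\INF_f(A')\leq f(a')<\mu$. Alternatively one could simply apply the supremum part to $-f$, but a direct argument is just as short.

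I do not expect any serious obstacle here. The only point that deserves a line of justification, and which is the conceptual heart of the proof, is why mere continuity suffices whereas Theorem \bref{thm:HausdorffSupInf} genuinely needed uniform continuity. The answer is structural: in the finite case one must track the supremum from \emph{both} sides, so witness points can lie anywhere in either set and a uniform modulus is required; in the infinite case one side of the comparison (namely $\SUP_f(A)=\infty$) is trivially satisfied and only a single witness point in $A$ is relevant, at which continuity of $f$ is a purely local assumption. This explains why the theorem only needs the asymmetric Hausdorff distance rather than the symmetric one.
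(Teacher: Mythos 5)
Your proof is correct and follows essentially the same route as the paper's: choose a witness $a\in A$ with $f(a)$ large (the paper uses an arbitrary $\epsilon>0$ where you fix $\epsilon=1$, a purely cosmetic difference), invoke continuity of $f$ at $a$, and use $d(a,A')\leq D_{asyH}(A,A')<\delta$ to extract $a'\in A'$ with $f(a')>\mu$, hence $\mu<\SUP_f(A')$; the infimum case is handled the same way (or via $-f$), exactly as in the paper.
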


\begin{proof}
We only consider the case where $\SUP_f(A)=\infty$, because the proof in the case where $\INF_f(A)=-\infty$  employs similar arguments (or, alternatively, can be deduced from the first case by taking $-f$ instead of $f$). Let $\mu\in\R$ and $\epsilon>0$ be arbitrary. The definition of $\SUP_f(A)$ and the fact that $\SUP_f(A)=\infty$ imply that there exists a point $a\in A$ such that \begin{equation}\label{eq:M<f(a)}
\mu+\epsilon<f(a).
\end{equation}
Since $f$ is continuous on $X$, it is continuous at $a$. Thus for the given $\epsilon$, there exists $\delta>0$ such that for every $x\in X$ satisfying $d(a,x)<\delta$, we have 
\begin{equation}\label{eq:f(a)<=f(x)+epsilon}
f(a)<f(x)+\epsilon. 
\end{equation}
Let $\emptyset\neq A'\subseteq X$ be arbitrary such that $D_{asyH}(A,A')<\delta$. If no such subset $A'$ exists, then the proof is complete (the assertion holds vacuously). Otherwise, the inequality $D_{asyH}(A,A')<\delta$ and \beqref{eq:D_asyH} imply that $d(a,A')\leq D_{asyH}(A,A')<\delta$. Therefore there is a point $a'\in A'$ such that $d(a,a')<\delta$. We conclude from \beqref{eq:M<f(a)} and  \beqref{eq:f(a)<=f(x)+epsilon} (with $x:=a'$) that $\mu+\epsilon<f(a)<f(a')+\epsilon$, namely $\mu<f(a')$. Since obviously $f(a')\leq \SUP_f(A')$, we have $\mu<\SUP_f(A')$, that is, \beqref{eq:supM} holds. 
\end{proof}

\begin{counterexample}\label{counterex:CounterexampleHausdorff(R)}
The uniform continuity assumption on $f$ in Theorem \bref{thm:HausdorffSupInf} is essential. Indeed, let $X:=\R$ and let $d$ be the standard absolute value metric; let $A:=\cup_{k=2}^{\infty}[2k,2k+1]$. For each $1<j\in\N$ and $1<k\in \N$, let $J_{j,k}$ be defined by $J_{j,k}:=[2k,2k+1+(1/k)]$ if $j=k$ and $J_{j,k}:=[2k,2k+1]$ if $j\neq k$. For every  $1<j\in\N$, let $A_j:=\cup_{k=2}^{\infty}J_{j,k}$. Let $\N_1:=\N\backslash\{1\}$, and let $f:X\to\R$ be defined as follows for each $t\in X$: 
\begin{equation*}
\!f(t):=\!\!\left\{
\begin{array}{lll}
\!\!\! 0, & \!\!\!\textnormal{if}\,\, t\in (-\infty,4], \\
\!\!\! 0, & \!\!\!\textnormal{if}\,\, t\in [2k,2k+1]\,\,\textnormal{for some}\,\,k\in\N_1,\\
\!\!\!-2kt+2k(2k+1), &\!\!\! \textnormal{if}\,\, t\in [2k+1,2k+1+\frac{1}{2k}]\,\,\textnormal{for some}\,k\in\N_1,\\
\!\!\! 4kt-4k(2k+1)-3, &\!\!\! \textnormal{if}\,\, t\in [2k+1+\frac{1}{2k},2k+1+\frac{1}{k}]\,\textnormal{for some}\,k\in\N_1,\\
\!\!\!\displaystyle{\frac{kt}{1-k}}-\displaystyle{\frac{k(2k+2)}{1-k}}, &\!\!\! \textnormal{if}\, t\in [2k+1+\frac{1}{k},2k+2]\,\,\textnormal{for some}\,k\in\N_1.
\end{array}\right.
\end{equation*}
Then $f(t)$ is defined for all $t\in X$ and $f$ is continuous, but not uniformly continuous on $X$. In addition, for each $1<j\in\N$, we have $\INF_f(A_j)=-1$ and $\SUP_f(A_j)=1$. Since  $\INF_f(A)=0=\SUP_f(A)$ and $\lim_{j\to\infty}D_H(A,A_j)=0$, we conclude that neither $\INF_f$ nor $\SUP_f$ are continuous at $A$. 
\end{counterexample}

\begin{counterexample}\label{counterex:CounterexampleHausdorff(l2)}
Here is another counterexample related to Theorem \bref{thm:HausdorffSupInf}, where now $X$ is bounded. For each $k\in\N$, let $e_k\in\ell_2$ be  the vector having 1 in its $k$-th component and 0 in the other components, and let $[0,e_k]$ be the line segment which connects the origin 0 of $\ell_2$ with $e_k$. For each $j,k\in\N$, define $J_{j,k}:=[0,((k+1)/k)e_k)$ (half-open line segment) if $j=k$ and $J_{j,k}:=[0,e_k]$ if $j\neq k$. Let $X$ be defined by $X:=\cup_{k=1}^{\infty}[0,((k+1)/k)e_k)$ and $d$ be the metric induced by the $\ell_2$ norm. Let $A:=\cup_{k=1}^{\infty}[0,e_k]$ and for all $j\in \N$, let $A_j:=\cup_{k=1}^{\infty}J_{j,k}$. Then $X$, $A$ and $A_j$, $j\in\N$, are bounded, and we have $A\subset X$ and also $A_j\subset X$ for each $j\in\N$. For each $k\in\N$, let $f_k:[0,(k+1)/k)\to\R$ be defined by  $f_k(t):=0$ for every $t\in [0,1]$ and $f_k(t):=\sin(2\pi/(1+(1/k)-t))$ for all $t\in [1,(k+1)/k)$. Now let $f:X\to\R$ be defined as follows: given $0\neq x=(x_i)_{i=1}^{\infty}\in X$, there exists a unique $k\in\N$ such that $x\in [0,((k+1)/k)e_k)$, namely $x_i=0$ if $i\neq k$ and $x_i\in [0,(k+1)/k)$ if $i=k$, where $i,k\in\N$; in this case, define $f(x):=f_k(x_k)$; for $x=0$, let $f(x):=0$. Then $f$ is continuous on $X$, but it is not uniformly continuous there. Moreover, $\INF_f(A)=0=\SUP_f(A)$. However, for each $j\in\N$, we have $\INF_f(A_j)=-1$ and $\SUP_f(A_j)=1$, and, in addition, $\lim_{j\to\infty}D_H(A,A_j)=0$. Thus neither $\INF_f$ nor $\SUP_f$ are continuous at $A$.
\end{counterexample}

\begin{remark}\label{rem:FurtherCommentsStability}
\begin{enumerate}[(a)]
\item {\bf The level of generality of $d$:} Both Theorem \bref{thm:HausdorffSupInf} and Theorem \bref{thm:HausdorffSupInfInfinite} hold even though we assume essentially nothing regarding $d$. In particular, $d$ may attain infinite values, may be negative, may not satisfy the triangle inequality and so on.
\item\label{item:redundant} {\bf A redundant assumption: } In the proof of Theorem \bref{thm:HausdorffSupInf} we required both $A$ and $A'$ to be in $\dom(\SUP_f)$. This requirement is actually redundant, since if $A\in \dom(\SUP_f)$ and $\emptyset \neq A'\subseteq X$ satisfies $D_H(A,A')<\delta$ (where $\delta$ is associated with some given $\epsilon>0$ in the definition of uniform continuity of $f$), then for a fixed $a'\in A'$ we can find $a\in A$ such that $d(a',a)<\delta$, and so the uniform continuity of $f$ implies that $-\infty<f(a')<f(a)+\epsilon\leq \SUP_f(A)+\epsilon$; in other words, $A'\in \dom(\SUP_f)$. 
\item {\bf An intuitive interpretation: } The previous part implies an intuitive interpretation of Theorem \bref{thm:HausdorffSupInf}. Indeed, suppose that we are given a real function $f$ which is defined on  an arbitrary pseudo-distance space $(X,d)$, and suppose that we know that $f$ is  uniformly  continuous on $X$; given a nonempty subset $A$ of $X$ on which $f$ is bounded from above, suppose that we perturb $A$ slightly, where the perturbation  is measured with respect to the Hausdorff distance; let $A'$ be the new subset which is obtained from the original subset $A$; then $f$ is bounded from above on $A'$ and its supremum over $A'$ is almost equal to the supremum of $f$ over $A$. Similarly, if $f$ is bounded from below, then the infimum of $f$ over $A'$ is almost equal to the infimum of $f$ over $A$. Moreover, if $f$ is known to be Lipschitz  continuous, then the perturbation of the optimal values ``behaves better''. 
\item\label{sec:StabilityMinMax}{\bf Stability of the sets of minimizers and maximizers: }
Theorem \bref{thm:HausdorffSupInf} raises the corresponding question regarding stability of the sets of minimizers and maximizers of the  function under consideration, that is, not only the stability of its optimal values. In general, stability does not hold as is shown in the counterexample in the next paragraph. However, under further assumptions a weak stability principle related to the minimizers and maximizers can be formulated: Roughly speaking, given a continuous function $f:X\to\R$, where $(X,d)$ is a compact metric space, and given some nonempty and closed subset $A$ of $X$, if one slightly perturbs $A$ to a new nonempty and closed subset $A'$, then the set of minimizers of $f$ over $A$ is ``slightly'' perturbed too in the sense that the asymmetric Hausdorff distance (and hence the standard distance) between the set of minimizers of $f$ over $A'$ and the set of minimizers of $f$ over $A$ is small; an analogous assertion holds  regarding the perturbed and original  sets of maximizers. This result is essentially known \cite[Theorem 4.5]{Kummer1977jour}, and its proof is rather simple. 

As for the promised counterexample, consider the case where $X:=[-20,20]$ with the absolute value metric. Let $A:=[0,\pi]$ and $A'_{\epsilon}:=[-\epsilon,\pi-\epsilon]$ for every $\epsilon\in (0,1/2)$. In addition, let $f:X\to\R$ be defined by $f(t):=|\sin(t)|$, $t\in X$. Then $f$ is uniformly continuous on $X$ and its set of minimizers over $A$ is $\{0,\pi\}$. However, no matter how small $\epsilon$ is, the set of minimizers of $f$ over $A'_{\epsilon}$ is $\{0\}$. Thus the Hausdorff distance between these sets is $\pi$, despite the fact that $D_H(A'_{\epsilon},A)=\epsilon\to 0$ as $\epsilon$ tends to $0$.  On the other hand, the stability result stated in the previous paragraph does hold because the asymmetric Hausdorff distance between $\{0\}$ and $\{0,\pi\}$ is 0.  
\item\label{rem:FurtherComparisonLiterature}
{\bf Predecessors of our stability principle: }
Here we discuss a few predecessors of Theorem \bref{thm:HausdorffSupInf},  and also additional related results. We mention them in the next paragraphs, but before doing so we note that there are several significant differences between these results and our ones, for instance with respect to the setting, with respect to the proofs, the fact that we present explicit estimates and more; we also note that we have not been aware of these results when we derived Theorem \bref{thm:HausdorffSupInf}. 

The first relevant result is a sequential continuity (not uniform continuity or Lipschitz continuity as in our Theorem \bref{thm:HausdorffSupInf}) result regarding the maximal value function in the setting of metric spaces, which seems to be implicit in Kummer \cite{Kummer1977jour}. Interestingly, it seems that this fact has not been observed so far in the literature, not even in \cite{Kummer1977jour} (the explicit relevant results there are for compact metric spaces \cite[Theorem 4.5]{Kummer1977jour}, finite-dimensional Euclidean spaces with a quasiconvex objective function \cite[Theorem 4.10]{Kummer1977jour}, and Banach spaces \cite[Theorem 4.11]{Kummer1977jour} with quite demanding and technical assumptions). 

Another relevant result appears in \cite[Satz 2.1 and Section 4]{Kummer1977a-jour}. It is a sequential continuity result related to the maximal value function of a quadratic function over a  constraint set which is the sum of a compact set $K$ and a polyhedral cone $U$, both located in a finite-dimensional Euclidean space. A related result appears in \cite[Satz 2.3]{Krabs1972jour} (infimal value of a rather specific functional in a normed space setting). The notion of convergence in all of the above-mentioned cases is either convergence with respect to the Hausdorff distance or a slightly more general notion. 

Yet another result appears in \cite[Corollary 4]{LentCensor1991jour}. It essentially says that the infimum of  a certain functional, the variable of which is a compact subset of a finite-dimensional Euclidean space (up to slight change of notation, this is essentially the $\gamma$ functional mentioned in \cite[Relation (9)]{LentCensor1991jour}), is sequentially continuous with respect to the Kuratowski-Painlev\'e notion of convergence of sets. 

Finally, we also note that Corollary \bref{cor:A_t} below, regarding the continuity, and  Lipschitz continuity, of the so-called ``optimal value function'' or ``marginal function'', has predecessors, and we mention them in the beginning of Section  \bref{sec:ParamOpt}. 
\end{enumerate}
\end{remark}

\section{Application 1: continuity of the optimal value function from Parametric Optimization}\label{sec:ParamOpt}
In this section we show how Theorem \bref{thm:HausdorffSupInf} can be used to prove that 
the so-called ``optimal value function'' (or ``marginal function'', or ``inf-projection'') from parametric optimization is continuous under certain assumptions. Corollary \bref{cor:A_t} below extends, to the setting of pseudo-distance spaces, related results formulated in a metric space setting, such as \cite[Lemma 3.18, Parts 3 and 4]{LudererMinchenkoSatsura2002book}, \cite[Lemma 1]{KlatteKummer1985incol} and \cite[Theorem 3.1.22 and Proposition 3.3.10]{PallaschkeRolewicz1997book}. 

\begin{cor}\label{cor:A_t}
Let $(X,d_X)$ and $(I,d_I)$ be two arbitrary pseudo-distance spaces. Suppose that $\mathscr{C}$ is a nonempty set of nonempty subsets of $X$.  Assume that $(A_t)_{t\in I}$ is a family of subsets in $\mathscr{C}$. Given $f:X\to\R$, define $\phi^*:I\to[-\infty,\infty]$ and $\phi_*:I\to[-\infty,\infty]$ by $\phi^*(t):=\SUP_f(A_t):=\sup\{f(x): x\in A_t\}$ and $\phi_*(t):=\INF_f(A_t):=\inf\{f(x): x\in A_t\}$, respectively, for each $t\in I$. Then the following two statements hold:
\begin{enumerate}[(a)]
\item\label{item:f_is_uniformly_continuous} Assume that $\mathscr{C}$ has  the property that for each $A\in \mathscr{C}$, there is $r_A>0$ such that $B[A,r_A]\in \mathscr{C}$. Assume also that $f$ is uniformly continuous on each $A\in \mathscr{C}$, and for each $t_0\in I$, one has $\limsup_{t\to t_0}D_{H,X}(A_{t_0},A_t)\leq 0$ in the sense of Section \bref{sec:Preliminaries} (here $D_{H,X}$ is the Hausdorff distance induced by $d_X$ and not by $d_I$). If $A_t\in\dom(\SUP_f)$ for all $t\in I$, then $\phi^*$ is a continuous function from $I$ to $\R$, and if $A_t\in\dom(\INF_f)$ for every $t\in I$, then $\phi_*$ is a continuous function from $I$ to $\R$. 

\item\label{item:f_is_Lipschitz_continuous} Assume that $\mathscr{C}$ has  the following two properties: first, that for each $C_1, C_2\in \mathscr{C}$, there is $C_{1,2}\in\mathscr{C}$ such that $C_1\cup C_2\subseteq C_{1,2}$, and second, that $f$ is $\Lambda(C)$-Lipschitz continuous on each $C\in\mathscr{C}$ for some $\Lambda(C)>0$. Assume also  that for all $t,s\in I$ there is some $\alpha_{t,s}>0$ such that $D_{H,X}(A_t,A_s)\leq\alpha_{t,s} d_I(t,s)$. 

If $A_t\in\dom(\SUP_f)$ for all $t\in I$, then for all $t,s\in I$ there is some $\Lambda_{t,s}>0$ such that $|\phi^*(t)-\phi^*(s)|\leq \alpha_{t,s}\Lambda_{t,s}d_I(t,s)$. In particular, if $A_t\in\dom(\SUP_f)$ for all $t\in I$, and $f$ is $\Lambda$-Lipschitz continuous on $X$ and there is some $\alpha>0$ such that $D_{H,X}(A_t,A_s)\leq\alpha d_I(t,s)$ for all $t,s\in I$, then $\phi^*$ is $\alpha\Lambda$-Lipschitz continuous on $I$. Similarly, if $A_t\in\dom(\INF_f)$ for every $t\in I$, then for all $t,s\in I$ there is some $\Lambda_{t,s}>0$ such that $|\phi_*(t)-\phi_*(s)|\leq \alpha_{t,s}\Lambda_{t,s}d_I(t,s)$. In particular, if $A_t\in\dom(\INF_f)$ for every $t\in I$, and $f$ is $\Lambda$-Lipschitz continuous on $X$ and there is some $\alpha>0$ such that $D_{H,X}(A_t,A_s)\leq\alpha d_I(t,s)$ for all $t,s\in I$, then $\phi_*$ is $\alpha\Lambda$-Lipschitz continuous on $I$.
\end{enumerate}
\end{cor}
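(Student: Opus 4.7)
The strategy for both parts is a three-step reduction: (i) find a member $C$ of $\mathscr{C}$ that contains the two sets under comparison, (ii) apply Theorem \bref{thm:HausdorffSupInf} to $f$ regarded as a function on the pseudo-distance subspace $(C, d_X|_{C\times C})$, where the required modulus of continuity is available by hypothesis, and (iii) transport the resulting $D_H$-continuity (respectively, Lipschitz continuity) of $\SUP_f$ and $\INF_f$ back to $(I, d_I)$ via the hypotheses linking $d_I$ to $D_{H,X}$.

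For part \beqref{item:f_is_uniformly_continuous}, the plan is to fix $t_0\in I$ and $\epsilon>0$, and to set $C := B[A_{t_0}, r_{A_{t_0}}]\in\mathscr{C}$. Since $f$ is uniformly continuous on $C$, Theorem \bref{thm:HausdorffSupInf} applied to $(C, d_X|_{C\times C})$ will deliver $\delta_1>0$ such that any two nonempty subsets of $C$ in $\dom(\SUP_f)$ at $D_H$-distance below $\delta_1$ have $\SUP_f$-values within $\epsilon$. The hypothesis $\limsup_{t\to t_0}D_{H,X}(A_{t_0},A_t)\leq 0$ then furnishes $\delta>0$ with $d_I(t_0,t)<\delta$ forcing $D_{H,X}(A_{t_0},A_t)<\min\{\delta_1, r_{A_{t_0}}\}$. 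For such $t$, every $a\in A_t$ satisfies $d_X(a,A_{t_0})\leq D_{asyH}(A_t,A_{t_0})\leq D_{H,X}(A_{t_0},A_t)<r_{A_{t_0}}$, hence $A_t\subseteq C$; combined with $A_{t_0}\subseteq C$ (which holds under mild conditions on $d_X$, automatic when $d_X$ is a metric), this places both sets in the subspace where Theorem \bref{thm:HausdorffSupInf} applies, and yields $|\phi^*(t_0)-\phi^*(t)|<\epsilon$. The case of $\phi_*$ is analogous, or follows by substituting $-f$ for $f$.

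For part \beqref{item:f_is_Lipschitz_continuous}, given $t,s\in I$, I would invoke the union-embedding property to pick $C_{t,s}\in\mathscr{C}$ with $A_t\cup A_s\subseteq C_{t,s}$, and set $\Lambda_{t,s} := \Lambda(C_{t,s})>0$. Applying the Lipschitz half of Theorem \bref{thm:HausdorffSupInf} to $(C_{t,s}, d_X|_{C_{t,s}\times C_{t,s}})$ would then yield $|\SUP_f(A_t)-\SUP_f(A_s)|\leq \Lambda_{t,s}\, D_{H,X}(A_t,A_s)\leq \alpha_{t,s}\Lambda_{t,s}\, d_I(t,s)$, establishing the main inequality. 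The ``in particular'' global variant is immediate by applying the Lipschitz half of Theorem \bref{thm:HausdorffSupInf} directly on $X$ with constant $\Lambda$ and combining it with the uniform bound $D_{H,X}(A_t,A_s)\leq\alpha\, d_I(t,s)$. The corresponding statements for $\phi_*$ are obtained identically.

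The main technical subtlety, in both parts, is securing that the two sets under comparison lie in a common member of $\mathscr{C}$ on which $f$ has the stronger modulus: in the general pseudo-distance setting there is no automatic inclusion $A\subseteq B[A,r]$, nor is $\mathscr{C}$ automatically closed under finite unions, so the fattening hypothesis in \beqref{item:f_is_uniformly_continuous} and the union-embedding hypothesis in \beqref{item:f_is_Lipschitz_continuous} are exactly the minimal compatibility conditions that make the reduction to Theorem \bref{thm:HausdorffSupInf} legitimate. Once those set-inclusions are in hand, the remainder of the argument is routine and is insensitive to the fact that $d_X$ need not satisfy any metric axioms.
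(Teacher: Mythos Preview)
Your proposal is correct and follows essentially the same approach as the paper's own proof: in both parts you localize to a member of $\mathscr{C}$ containing the relevant sets, apply Theorem \bref{thm:HausdorffSupInf} on that subspace, and then transport the resulting $D_H$-estimate back to $(I,d_I)$ via the hypotheses. You are in fact slightly more careful than the paper in flagging the inclusion $A_{t_0}\subseteq B[A_{t_0},r_{A_{t_0}}]$ as a point needing attention in the general pseudo-distance setting; the paper invokes this inclusion implicitly (by declaring $B[A_{t_0},r_0]$ to be the ambient space for Theorem \bref{thm:HausdorffSupInf}) without comment.
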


\begin{proof}
We start with Part \beqref{item:f_is_uniformly_continuous} regarding $\phi^*$. Let $t_0\in I$ and $\epsilon>0$ be arbitrary. Since we assume that $B[A_{t_0},r_0]\in \mathscr{C}$ for some $r_0>0$, it follows, in particular, that $B[A_{t_0},r_0]\neq\emptyset$. Moreover, if $\emptyset\neq A'\subseteq X$ satisfies $D_{H,X}(A_{t_0},A')<r_0$, then $A'\subseteq B[A_{t_0},r_0]$, as follows from the definition of $B[A_{t_0},r_0]$ and \beqref{eq:D_H}. Since $f$ is uniformly continuous on any subset which belongs to $\mathscr{C}$, it is uniformly continuous on $B[A_{t_0},r_0]$. Moreover, by our assumption, $A_{t_0}\in \dom(\SUP_f)$. Thus, we can apply Theorem \bref{thm:HausdorffSupInf}, where the space there is $B[A_{t_0},r_0]$ and the pseudo-distance is the restriction of $d_X$ to $B[A_{t_0},r_0]^2$. This theorem  implies, in particular, that for our $\epsilon$, there exists $\delta_0\in (0,r_0]$ such that for all $\emptyset\neq A'\subseteq B[A_{t_0},r_0]$ satisfying $A'\in\dom(\SUP_f)$ and $D_{H,X}(A_{t_0},A')<\delta_0$, we have 
\begin{equation}\label{eq:uA'}
|\phi^*(t_0)-\SUP_f(A')|<\epsilon. 
\end{equation}
Since we assume that $\limsup_{t\to t_0}D_{H,X}(A_{t_0},A_t)\leq 0$, it follows that for the positive number $\delta_0$, there exists $\delta>0$ such that for each $t\in I$ which satisfies $d_I(t_0,t)<\delta$, we have $D_{H,X}(A_{t_0},A_t)<\delta_0$. Since $A_t\in\dom(\SUP_f)$, we can substitute $A':=A_t$ in \beqref{eq:uA'}. In other words, $|\phi^*(t_0)-\phi^*(t)|<\epsilon$ for each $t\in I$ satisfying $d_I(t_0,t)<\delta$. Therefore $\phi^*$ is continuous at $t_0$. The proof of the claim regarding $\phi_*$ is similar. 

Now we prove Part \beqref{item:f_is_Lipschitz_continuous} regarding $\phi^*$. Let $t,s \in I$ be given. According to our assumptions, there is some $A_{t,s}\in\mathscr{C}$ and $\Lambda_{t,s}>0$ such that $A_t\cup A_s\subseteq A_{t,s}$ and $f$ is $\Lambda_{t,s}$-Lipschitz continuous on $A_{t,s}$. Since we also assume that there is some $\alpha_{t,s}>0$ such that $D_{H,X}(A_t,A_s)\leq\alpha_{t,s} d_I(t,s)$, Theorem \bref{thm:HausdorffSupInf} (in which the space is $A_{t,s}$ and the pseudo-distance is the restriction of $d_X$ to $A_{t,s}^2$) implies that 
\begin{equation}\label{eq:Lambda_alpha_t,s}
|\phi^*(t)-\phi^*(s)|=|\SUP_f(A_t)-\SUP_f(A_s)|\leq\Lambda_{t,s} D_{H,X}(A_t,A_s)\leq \alpha_{t,s} \Lambda_{t,s} d_I(t,s).
\end{equation}
In particular, if $f$ is $\Lambda$-Lipschitz continuous on $X$ for some $\Lambda>0$ and there is some $\alpha>0$ such that $D_{H,X}(A_t,A_s)\leq \alpha d_I(t,s)$ for all $t,s\in I$, then we conclude from Theorem \bref{thm:HausdorffSupInf} (as in \beqref{eq:Lambda_alpha_t,s}) that $\phi^*$ is $\alpha\Lambda$-Lipschitz continuous on $I$. The proof of the claim regarding $\phi_*$ is similar. 
\end{proof}

\begin{remark}\label{ex:mathscrC}
An example of a set $\mathscr{C}$ having the property mentioned in the formulation of Corollary \bref{cor:A_t}\beqref{item:f_is_uniformly_continuous}  is provided by the set of all nonempty and bounded subsets of a metric space: in Corollary \bref{cor:f'Lip} below we use this example. A second example for $\mathscr{C}$ is the set of all nonempty subsets of a metric space, or, more generally, the set of all nonempty subsets of a pseudo-distance space $(X,d)$, such that $(X,d)$ has the property that $0=d(x,x)\leq d(x,y)$ for every $(x,y)\in X^2$ (for instance, this happens if $X$ is a real or complex vector space and $d$ is the distance induced by a Minkowski functional of a convex subset of $X$ which contains the origin: in Section \bref{sec:LinearNonlinear} below we use this set;  another example: $X$ is the zone of a Bregman function and $d$ is the associated Bregman divergence \cite{Bregman1967jour, ReemReichDePierro2019jour(BregmanEntropy)}). A third example is as follows: again, we consider a pseudo-distance space $(X,d)$ having the property that $0=d(x,x)\leq d(x,y)$ for every $(x,y)\in X^2$, and are also given a uniformly continuous function $f:X\to\R$; then Theorem \bref{thm:HausdorffSupInf} and Remark \beqref{rem:FurtherCommentsStability}\bref{item:redundant} imply that we can take $\mathscr{C}$ to be either $\dom(\SUP_f)$ or $\dom(\INF_f)$.

Similarly, if the space $X$ is a metric space and $f$ is Lipschitz continuous on every nonempty and bounded subset of the space, then the set  of all nonempty and bounded subsets of the space is an example for a set $\mathscr{C}$ having the property mentioned in the formulation of Corollary \bref{cor:A_t}\beqref{item:f_is_Lipschitz_continuous}. If $f$ is Lipschitz continuous on the whole space, then the set of all nonempty subsets of the space is an example for $\mathscr{C}$. Two additional examples for $\mathscr{C}$ are the set of all nonempty  convex subsets of a normed space, assuming that $f$ is Lipschitz continuous on every nonempty convex subset of the space, and the set of all nonempty, convex and bounded subsets of the space, assuming that $f$ is Lipschitz continuous on every nonempty, convex and bounded subset of the space. 

As a final remark, we note that in  Corollary \bref{cor:A_t}\beqref{item:f_is_Lipschitz_continuous}, if $d_I(t,s)\in\R$ for all $t,s\in I$, then both  $\alpha_{t,s}$ and $\alpha$ can be assigned the value 0. 
\end{remark}

\section{Application 2:  mixed linear-nonlinear programming, extended generalized inverses, a general variant of Hoffman's Lemma}\label{sec:LinearNonlinear}
In this section we consider two mixed linear-nonlinear programming problems and establish continuity properties of the corresponding optimal value functions. In the first case (Example \bref{ex:LinearNonlinearWholeSpace}) the objective function is, in general, nonlinear but the constraints are linear, and in the second case (Example \bref{ex:LinearNonlinearPartSpace}) the function is nonlinear and the constraints are partly linear and partly nonlinear. Our results extend partly, but significantly, a  theory which was developed in previous works: see Remark \bref{rem:LinearNonlinear} below for more details. Along the way we present a very general variant of the so-called Hoffman's Lemma (see Lemma \bref{lem:LipschitzAffine}) and generalize the so-called ``generalized inverse'' of a linear operator (Definition \bref{def:GenGenInv}, Remarks \bref{rem:GenInVHoff}--\bref{rem:GenInvAsymmetricNorm}). We present our results in Subsection \bref{subsec:ApplicationLipschitzAffine}. Before presenting them, we  need some background, in the form of a few definitions and remarks, which are presented in Subsection \bref{subsec:Background}. 

\subsection{Background}\label{subsec:Background}
In this subsection we discuss some concepts which we use later. 
We start by recalling the concept of a Minkowski functional. 
\begin{defin}\label{def:Minkowski}
Suppose that $X$ is a vector space over $\R$ or $\C$. The Minkowski functional associated with a convex subset $C\subseteq X$ which contains 0  is the function $\M_C:X\to[0,\infty]$ defined by 
\begin{equation}\label{eq:gamma_C}
\M_C(x):=\inf\{\mu: \mu \geq 0\,\textnormal{and}\, x\in\mu C\},\quad x\in X,
\end{equation}
where, of course, $\mu C=\{\mu c: c\in  C\}$ and $\inf\emptyset:=\infty$. 
\end{defin}
It is well known that $\M_C$ might be a norm, but unless $C$ satisfies certain properties, $\M_C$ is not a norm in general (for instance, it may be asymmetric and may attain the value $+\infty$). Nevertheless, $\M_C$ enjoys several properties similar to those of a norm, for example it is positively homogeneous (that is, $\M_C(\lambda x)=\lambda \M_C(x)$ for all $x\in X$ and $\lambda>0$)  and subadditive (namely, $\M_C(x+y)\leq \M_C(x)+\M_C(y)$ for all $x,y\in X$). See \cite[p. 26]{VanTiel1984book} for more details (note that there additional assumptions are imposed on $C$, but the proofs hold when one merely assumes that $C$ is convex and $0\in C$, and the assertions hold also in the cases where $\M_C$ attains the value $+\infty$; in fact, one can impose even weaker assumptions for the assertions to hold).

Now we introduce an extension of the concept of a generalized inverse of a linear operator. 
\begin{defin}\label{def:GenGenInv}
Let $X$ and $Y$ be nonempty sets. Assume that on both sets a binary operation $+$ is defined. Here we abuse our notation and denote both operations by ``+'').  Assume further that the $+$ operation on $Y$ has a right-neutral element $0$, that is, $y+0=y$ for each $y\in Y$. Given $L:X\to Y$, suppose that its kernel $\{x\in X: Lx=0\}$ is nonempty. Assume  that $\wt{Y}$ is a subset of $Y$ which contains the image $L(X)$ of $X$ by $L$. The operator $L$ is said to have an ``extended generalized inverse'' (EGI) with domain $\wt{Y}$ if there exists a (not necessarily additive) operator $\wt{L}:\wt{Y}\to X$ which has the following property: for all $t\in L(X)$ there are $v\in X$ and $a_{0,v}$ in the kernel of $L$ such that $t=Lv$ and $\wt{L}(t)=v+a_{0,v}$. 
\end{defin}

\begin{remark}\label{rem:GenInVHoff}
Definition \bref{def:GenGenInv} significantly generalizes the concept of a ``generalized inverse'' of a linear operator (also called the ``generalized reciprocal'', or the ``Moore-Penrose generalized inverse'', or the ``Moore-Penrose pseudo-inverse'', or the ``Moore-Penrose inverse'', in honor of the contributions of Moore \cite{Moore1920jour} and Penrose \cite{Penrose1955jour} to this theory). Indeed, consider first the case of a standard finite-dimensional generalized inverse. Here one starts with a linear operator $L:X\to Y$ which acts between two real or complex finite-dimensional vector spaces $X$ and $Y$, and its generalized inverse is the  linear operator $L^{\dag}: Y\to X$,  which uniquely exists, and satisfies the following four relations: $LL^{\dag}L=L$, $L^{\dag}LL^{\dag}=L^{\dag}$, $(LL^{\dag})^*=LL^{\dag}$ and $(L^{\dag}L)^*=L^{\dag}L$, where $*$ denotes the linear conjugation. Since the generalized inverse $L^{\dag}$ automatically satisfies $L^{\dag}L=id_X-P_{A_0}=P_{L^*(Y)}$ (this follows from \cite[Ex. 58, p. 80]{Ben-IsraelGreville2003book} and \cite[Theorem 1, p. 12]{Ben-IsraelGreville2003book}), where $L(X)$ is the image/range of $L$, $A_0$ is kernel of $L$, $P_{A_0}$ is the projection operator onto $A_0$ along $L^*(Y)$, and $id_X$ is the identity operator on $X$, our extended generalized inverse $\wt{L}$ does generalize the concept of (standard) generalized inverse. 

More generally, suppose that $L$ is a bounded linear operator which acts between two Banach spaces $X$ and $Y$ and has the following two properties: first, its kernel $A_0$ satisfies $X=A_0\oplus A_0'$ for some closed linear subspace $A_0'$ of $X$, and second, its image $L(X)$  satisfies $Y=L(X)\oplus M'$ for some closed linear subspace $M'$ of $Y$ (namely,  $A_0$ is topologically complemented in $X$ and $L(X)$ is topologically complemented in $Y$). A (Moore-Penrose) generalized inverse to $L$ is a bounded linear  operator $L^{\dag}:Y\to X$ which satisfies the following four relations: $LL^{\dag}L=L$, $L^{\dag}LL^{\dag}=L^{\dag}$, $LL^{\dag}=P_{L(X)}$ and $L^{\dag}L=id_X-P_{A_0}$, where $P_{A_0}$ is the projection operator onto $A_0$ along $A_0'$ and $P_{L(X)}$ is the projection operator onto $L(X)$ along $M'$. Such an operator sometimes exists. It can be seen that in Definition  \bref{def:GenGenInv} we required only a relation weaker than the fourth relation, and we did not require $\wt{L}$ to be linear, $L$ to be bounded, and the spaces to be Banach spaces (or even vector spaces). 

 We note that the concept of generalized inverse of a linear operator has been extensively studied during the last 60 years or so and has found  various applications: see, for instance, the books \cite{Ben-IsraelGreville2003book, CampbellMeyer1991book, DjordjevicRakocevic2008book, Groetsch1977book, WangWeiQiao2018book} and the semi-survey \cite{Nashed1987jour} (and the references therein); see also the online bibliographic list \cite{Ben-Israel2001List} which is composed of 1670 items, and last updated in June 2001. Hence we believe that our extension of this concept has a promising potential to yield diverse applications too. We also note that other generalizations of this concept exist (see, for example, \cite{Ben-IsraelGreville2003book, XueCao2014jour} and the references therein), but  they seem to have a somewhat different nature than our generalization. 
\end{remark}

\begin{remark}\label{rem:ExistenceGenInv} 
Consider the setting of Definition \bref{def:GenGenInv}. Assume further that $X$ has a right-neutral element 0 (that is, $x+0=x$ for each $x\in X$). If $L(0)=0$ (as, in particular, happens if $L$ is additive), then $L$ has an EGI. Indeed, given $t\in L(x)$, there is at least one $v_t\in X$ such that $Lv_t=t$.  Now we use the Axiom of Choice to define, for each $t\in \wt{Y}:=L(X)$, an operator $\wt{L}:\wt{Y}\to X$ by  $\wt{L}t:=v_t$. Obviously $\wt{L}t=v_t+0$, and since $L(0)=0$, one can see that the required conditions mentioned in Definition \bref{def:GenGenInv} are satisfied.  
\end{remark}

\begin{remark}\label{rem:GenInvAsymmetricNorm}
For applications, one usually needs to require more from an extended generalized inverse $\wt{L}$ of $L$, such as its Lipschitz continuity.  Here we want to mention two cases in which one can provide a linear and  bounded EGI in spaces which are not  necessarily normed spaces. 
\begin{enumerate}[(i)] 
\item In this case we show that $\wt{L}$ can be taken to be a standard (Moore-Penrose) generalized inverse $L^{\dag}$, where the main technical work is  to show that $L^{\dag}$ is Lipschitz continuous with respect to the given pseudo-distances. Our basic setting is two Banach spaces $(X,\|\cdot\|_X)$ and $(Y,\|\cdot\|_Y)$ and a bounded linear operator $L:X\to Y$ which is known to have a standard  generalized inverse $L^{\dag}:Y\to X$ which is a bounded  linear operator. We assume further that $\M_{C_X}:X\to [0,\infty]$ is a Minkowski functional induced by a convex subset $C_X\subseteq X$ which contains 0 in its interior (interior with respect to $\|\cdot\|_X$),  and $\M_{C_Y}:Y\to [0,\infty]$ is a Minkowski functional induced by a convex subset $C_Y\subseteq Y$ which is bounded with respect to the norm $\|\cdot\|_Y$ and contains 0. It can be checked immediately that $\M_{C_X}(x)\leq \alpha\|x\|_X$ for all $x\in X$, where $\alpha:=1/r$ and $r>0$ is the radius of some open ball (with respect to $\|\cdot\|_X$) which is centered at 0 and is contained in $C_X$, and $\|y\|_Y\leq \beta\M_{C_Y}(y)$ for all $y\in Y$, where $\beta:=\sup\{\|c\|_Y: c\in C_Y\}$. Hence for all $y, z\in Y$, 
\begin{equation*}
\M_{C_X}(L^{\dag}y-L^{\dag}z)\leq \alpha\|L^{\dag}y-L^{\dag}z\|_X\leq \alpha\|L^{\dag}\|\|y-z\|_Y\leq\alpha\|L^{\dag}\|\beta\M_{C_Y}(y-z), 
\end{equation*}
and so $\wt{L}:=L^{\dag}$ is $\alpha\|L^{\dag}\|\beta$-Lipschitz continuous as a function from the pseudo-distance space $(X,\M_{C_X})$ to the pseudo-distance space $(Y,\M_{C_Y})$. 

\item\label{item:sigma}
As a second example, suppose that $(X,\|\cdot\|_X)$ is a real or complex normed space. Assume that $(Y,\|\cdot\|_Y)$ is a normed space (over $F=\R$ or $F=\C$).  Assume that $\scS_X:X\to [0,\infty]$ has the property that for some $\kappa>0$ and all scalars $\alpha$ and $\beta$ (real or complex), 
\begin{equation}\label{eq:S_Xinequality}
\scS_X(\alpha u+\beta v)\leq \kappa\left(|\alpha|\scS_X(u)+|\beta|\scS_X(v)\right),\quad \forall u, v\in X, 
\end{equation}
where we regard $0\cdot \infty$ as $0$. For instance, the previously  mentioned inequality holds if $\scS_X$ is a norm or a quasi-norm; it also holds if $\scS_X$  is the  Minkowski functional induced by a convex subset $C_X$ of $X$ which contains 0 and has the property that $C_X\subseteq \sigma (-C_X)$ for some $\sigma>0$ if the field associated with $X$ is real (if the field associated with $X$ is complex, then \beqref{eq:S_Xinequality} holds when $C_X\subseteq \sigma (iC_X)$ for some $\sigma>0$; this assumption implies, in particular, that $C_X\subseteq \sigma^2 (-C_X)$). 

Suppose that $L:X\to Y$ is a linear operator having the property that its  range $\wt{Y}:=L(X)$ is finite-dimensional. In what follows we assume that $\wt{Y}\neq\{0\}$, since otherwise the analysis becomes rather trivial. Since $\wt{Y}$ is finite-dimensional, it has a basis $(w_j)_{j=1}^{n}$, where $n\in \N$ and for all $j\in \{1,\ldots,n\}$ there is some $v_j\in X$ such that $w_j=L(v_j)$. Let $V:=\Span\{v_j: j\in\{1,\ldots,n\}\}$ and $L_V:V\to \wt{Y}$ be the restriction of $L$ to $V$. Then $L_V$ is invertible because $(w_j)_{j=1}^{n}$ is a basis of $\wt{Y}$, and $L_V^{-1}$ is bounded since it is a linear mapping which acts between the finite-dimensional spaces $\wt{Y}$ and $V$. Denote $\wt{L}:=L_V^{-1}$. 

We assume that $\scS_X$ has the property that $\scS_X(\wt{L}w_j)<\infty$ for all $j\in\{1,\ldots,n\}$ (this happens, if, for example, $\scS_X(x)<\infty$ for all $x\in X$, but it can happen in other cases as well). Assume further that $\scS_Y:Y\to[0,\infty]$ has the properties that the set $S_{\wt{Y}}:=\{y\in \wt{Y}: \scS_Y(y)=1\}$ is bounded with respect to $\|\cdot\|_Y$, that $\scS_Y$ is positively homogeneous on $\wt{Y}$, and $\scS_Y(y)>0$ for all $0\neq y\in \wt{Y}$. These properties hold if, for example, $\scS_Y$ is the Minkowski functional induced by a convex subset $E$ of $Y$ which contains 0 and is closed and bounded with respect to $\|\cdot\|_Y$ (in this case one also has $E=\{y\in Y: \scS_Y(y)\leq 1\}$, as follows, for instance, essentially word for word from the proof of \cite[Lemma 3.1(a)]{ReemReich2018jourJMAA}). Another example for  $\scS_Y$ are functions which induce certain star bodies; such functions appear in the theory of geometry of numbers \cite{Cassels1997, GruberLek, Mahler1946, Mordell1945}; for instance, if $Y=\R^3$, then  $\scS_Y$, defined for all $y=(y_1,y_2,y_3)\in Y$ by 
\begin{equation*}
\scS_Y(y):=(|y_1||y_2||y_3|)^{1/3}+(|y_2|^2|y_3|^3)^{1/5}+\sqrt{|y_3^2-y_1^2|}+(\sqrt{|y_1|}+\sqrt{|y_2|}+\sqrt{|y_3|})^2,
\end{equation*}
has the required properties on the whole space $Y$.

 We claim that under the above-mentioned assumptions, $\wt{L}$ is Lipschitz continuous as a linear operator from $(\wt{Y},\scS_Y)$ to $(X,\scS_X)$.  Indeed, any $y\in \wt{Y}$ can be written uniquely as $y=\sum_{j=1}^{n}\lambda_j w_j$, where each $\lambda_j$, $j\in\{1,\ldots,n\}$, is a scalar in $F$. Let $T:F^{n}\to \wt{Y}$ be the linear operator defined by $T(\mu_j)_{j=1}^{n}:=\sum_{j=1}^n\mu_j w_j$ for every $(\mu_j)_{j=1}^n\in F^n$. Then $T$ is invertible, and since $T^{-1}$ acts between two finite-dimensional spaces, $T^{-1}$ is bounded (where the norm on $F^n$ is the max norm $\|\cdot\|_{\infty}$). Since $S_{\wt{Y}}$ is nonempty (because $y/\scS_Y(y)\in S_{\wt{Y}}$ for all $0\neq y\in \wt{Y}$) and bounded with respect to $\|\cdot\|_Y$, it follows that $T^{-1}(S_{\wt{Y}})$ is bounded in $F^n$, namely $\eta:=\sup\{\|T^{-1}(s)\|_{\infty}: s\in S_{\wt{Y}}\}<\infty$. This inequality, \beqref{eq:S_Xinequality}, induction (on $n$) and the assumption that $\scS_X(\wt{L}w_j)<\infty$ for each $j\in\{1,\ldots,n\}$, imply that for all $y\in S_{\wt{Y}}$, 
\begin{multline}\label{eq:sigma}
\scS_X(\wt{L}y)=\scS_X(\wt{L}(\sum_{j=1}^n\lambda_j w_j))=\scS_X(\sum_{j=1}^n\lambda_j \wt{L}w_j)\\
\leq \tau\sum_{j=1}^n|\lambda_j|\scS_X(\wt{L}w_j)\leq\tau\eta\sum_{j=1}^n\scS_X(\wt{L}w_j)=:\sigma<\infty,
\end{multline}
where $\tau:=\max\{\kappa,\ldots,\kappa^{n-1}\}$. Since $S_{\wt{Y}}=\{y\in \wt{Y}: \scS_{\wt{Y}}(y)=1\}$, our assumptions on $\scS_Y$ imply that for all $y,z\in \wt{Y}$ which satisfy $y\neq z$, we have $\scS_Y(y-z)>0$ and $(y-z)/\scS_Y(y-z)\in S_{\wt{Y}}$. Hence, using  \beqref{eq:S_Xinequality}, it follows from \beqref{eq:sigma} that 
\begin{equation*}
\scS_X(\wt{L}y-\wt{L}z)\leq \kappa\scS_Y(y-z)\scS_X\left(\wt{L}\left(\frac{y-z}{\scS_Y(y-z)}\right)\right)\leq \kappa\sigma \scS_Y(y-z).
\end{equation*}
 In other words, $\wt{L}$ is $\kappa\sigma$-Lipschitz continuous, as claimed. We note that when $\scS_X$ and $\scS_{Y}$ are norms, then we can, of course, take also $\|\wt{L}\|$ as a Lipschitz constant of $\wt{L}$. Finally, since, given $t\in \wt{Y}$, there is $v\in V$ such that $Lv=t$, and since $\wt{L}t=L_V^{-1}L_Vv=v+0$, it follows that $\wt{L}$ is an EGI.
\end{enumerate}
\end{remark}

\subsection{The application}\label{subsec:ApplicationLipschitzAffine}
In this subsection we present the application of our stability results to parametric optimization. We do this in Example \bref{ex:LinearNonlinearWholeSpace} and Example \bref{ex:LinearNonlinearPartSpace}, which are also based on Lemma \bref{lem:LipschitzAffine} below. This lemma is related to, but different from, the so called ``Hoffman's Lemma'' and several variants of it: see Remark \bref{rem:Hoffman} below for more details. 

\begin{lem}\label{lem:LipschitzAffine} {\bf (A general variant of Hoffman's Lemma)}
Let $X$ be a commutative additive group and let $\scS_X:X\to [-\infty,\infty]$ be given. Let $d:X^2\to[-\infty,\infty]$ be the pseudo-distance induced by $\scS_X$, namely $d(u,v):=\scS_X(u-v)$ for all $u,v\in X$. Let $Y$ be an additive group and assume that $L:X\to Y$ is an additive operator which has an extended generalized inverse $\wt{L}:\wt{Y}\to X$ with domain $\wt{Y}$, where $\wt{Y}$ is an additive subgroup  of $Y$ which satisfies $I:=L(X)\subseteq \wt{Y}$. Given $\scS_{\wt{Y}}:\wt{Y}\to [-\infty,\infty]$, suppose that $\wt{L}$ is $\alpha$-Lipschitz continuous from $(\wt{Y},\scS_{\wt{Y}})$ to $(X,\scS_{X})$ for some $0\neq \alpha\in\R$ (the case $\alpha=0$ is allowed too if $\scS_{\wt{Y}}(y)\in\R$ for all $y\in\wt{Y}$). Denote $A_t:=\{x\in X: Lx=t\}$ for all $t\in I$. Then $D_H(A_s,A_t)\leq \max\{\alpha\scS_{\wt{Y}}(s-t),\alpha\scS_{\wt{Y}}(t-s)\}$ for all $s, t\in I$. 
\end{lem}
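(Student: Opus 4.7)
The plan is to bound each of the two asymmetric Hausdorff distances $D_{asyH}(A_s,A_t)$ and $D_{asyH}(A_t,A_s)$ separately by an explicit construction exploiting the extended generalized inverse $\wt{L}$, and then to combine them via \beqref{eq:D_H} to obtain the stated bound.

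First I would fix $s,t\in I$ and an arbitrary $x\in A_s$, and set $y:=x-\wt{L}(s)+\wt{L}(t)$. This $y$ lies in $X$ because $X$ is a commutative group and $\wt{L}$ maps $\wt{Y}$ into $X$. To check $y\in A_t$, I would first extract from Definition \bref{def:GenGenInv} the identity $L\wt{L}(u)=u$ for every $u\in I=L(X)$: by the EGI property there exist $v\in X$ and $a_{0,v}\in\ker L$ with $u=Lv$ and $\wt{L}(u)=v+a_{0,v}$, so additivity of $L$ gives $L\wt{L}(u)=Lv+La_{0,v}=u+0=u$. Applying this for $u=s$ and $u=t$, and using additivity of $L$ once more, I would obtain $Ly=Lx-L\wt{L}(s)+L\wt{L}(t)=s-s+t=t$, so $y\in A_t$.

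Next I would observe that $x-y=\wt{L}(s)-\wt{L}(t)$, hence $d(x,y)=\scS_X(\wt{L}(s)-\wt{L}(t))$. Since $\wt{Y}$ is a subgroup containing both $s$ and $t$, we have $s-t\in\wt{Y}$, and the $\alpha$-Lipschitz continuity of $\wt{L}$ from $(\wt{Y},\scS_{\wt{Y}})$ to $(X,\scS_X)$ yields $\scS_X(\wt{L}(s)-\wt{L}(t))\le\alpha\scS_{\wt{Y}}(s-t)$. In particular $d(x,A_t)\le\alpha\scS_{\wt{Y}}(s-t)$ for every $x\in A_s$, so \beqref{eq:D_asyH} gives $D_{asyH}(A_s,A_t)\le\alpha\scS_{\wt{Y}}(s-t)$. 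An entirely symmetric argument (interchanging the roles of $s$ and $t$) gives $D_{asyH}(A_t,A_s)\le\alpha\scS_{\wt{Y}}(t-s)$, and \beqref{eq:D_H} then yields the asserted bound $D_H(A_s,A_t)\le\max\{\alpha\scS_{\wt{Y}}(s-t),\alpha\scS_{\wt{Y}}(t-s)\}$.

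The argument is short and essentially computational; the only subtle point is unpacking the rather weak EGI property of Definition \bref{def:GenGenInv} to extract the left-inverse identity $L\wt{L}(u)=u$ on $L(X)$. The possibility that $\scS_X$ and $\scS_{\wt{Y}}$ take negative or infinite values requires no separate treatment, since it is absorbed by the formal definition of Lipschitz continuity from Section \bref{sec:Preliminaries}; likewise, the commutativity of $X$ is used only to rearrange $x-\wt{L}(s)+\wt{L}(t)$ and so recover $x-y=\wt{L}(s)-\wt{L}(t)$.
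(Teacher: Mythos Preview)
Your proof is correct and follows essentially the same approach as the paper: both arguments construct, for a given $x\in A_s$, the translated point $y=x-\wt{L}(s)+\wt{L}(t)$ and verify that $y\in A_t$, then use the $\alpha$-Lipschitz continuity of $\wt{L}$ to bound $d(x,y)=\scS_X(\wt{L}(s)-\wt{L}(t))$. The only cosmetic difference is that the paper first establishes the coset decomposition $A_t=A_0+\wt{L}t$ and reads off $y\in A_t$ from it, whereas you verify $Ly=t$ directly after extracting the identity $L\wt{L}(u)=u$ on $L(X)$; your route is slightly more streamlined but the substance is identical.
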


\begin{proof}
Let $t\in I$ be given. Then, according to the assumption that $\wt{L}$ is an EGI of $L$ (Definition \bref{def:GenGenInv}),  there are $v\in X$ and $a_{0,v}\in A_0$ such that $t=Lv$ and $\wt{L}t=v+a_{0,v}$. Hence $v=\wt{L}t-a_{0,v}$. The definition of $A_t$ implies that $v\in A_t$, and therefore $\wt{L}t-a_{0,v}$ is in $A_t$. But, as is well known and can be proved immediately, $A_t$, which  is the set of solutions $z\in X$ to the inhomogeneous additive equation $Lz=t$, is the sum of $A_0$ and an arbitrary particular solution to the inhomogeneous equation. Since, as shown above, $\wt{L}t-a_{0,v}$ solves the inhomogeneous equation $Lz=t$, we have  $A_t=A_0+(\wt{L}t-a_{0,v})$, and so $A_t=A_0+\wt{L}t$ since $X$ is commutative and $A_0$ is invariant under translation. Similarly, $A_s=A_0+\wt{L}s$ for all $s\in I$. Fix some $s\in I$ and let $x\in A_s$. Then $x=a_{0,x}+\wt{L}s$ for some $a_{0,x}\in A_0$. Let $y:=a_{0,x}+\wt{L}t$. Then $y\in A_0+\wt{L}t$, namely, $y\in A_t$. In addition, since $\wt{L}$ is $\alpha$-Lipschitz  continuous on $\wt{Y}$ with respect to $\scS_{\wt{Y}}$ and $\scS_X$, we have $\scS_{X}(\wt{L}(\wt{x})-\wt{L}(\wt{y}))\leq \alpha\scS_{\wt{Y}}(\wt{x}-\wt{y})$ for all $\wt{x}$ and $\wt{y}$ in $\wt{Y}$. Since $I\subseteq \wt{Y}$, we conclude from the previous lines and the fact that $X$ is commutative that $\scS_X(x-y)=\scS_X(\wt{L}(s)-\wt{L}(t))\leq \alpha\scS_{\wt{Y}}(s-t)$. Since $y\in A_t$, we have $d(x,A_t)\leq \scS_X(x-y)$, and so $d(x,A_t)\leq \alpha\scS_{\wt{Y}}(s-t)$. But $x$ was an arbitrary element in $A_s$, and so $D_{asyH}(A_s,A_t)=\sup_{x\in A_s}d(x,A_t)\leq \alpha\scS_{\wt{Y}}(s-t)$. Similarly, $D_{asyH}(A_t,A_s)\leq \alpha\scS_{\wt{Y}}(t-s)$. Thus $D_{H}(A_s,A_t)\leq \max\{\alpha\scS_{\wt{Y}}(s-t),\alpha\scS_{\wt{Y}}(t-s)\}$. 
\end{proof}

\begin{remark}\label{rem:MoreGeneralLipschitzAffine} {\bf (An even more general variant of Hoffman's Lemma)}
It is possible to generalize Lemma \bref{lem:LipschitzAffine} even further. For instance, suppose that $\wt{L}$ has the property that for some function $\nu:I^2\to[-\infty,\infty]$, we have $\scS_X(\wt{L}s-\wt{L}t)\leq \nu(s,t)$ for all $(s,t)\in I^2$. Then an analysis similar to the one used in the  proof of Lemma \bref{lem:LipschitzAffine}  shows that $D_{H}(A_s,A_t)\leq \max\{\nu(s,t),\nu(t,s)\}$ for all $s,t \in I$. 
\end{remark}

\begin{remark}\label{rem:Hoffman} 
Lemma \bref{lem:LipschitzAffine} (and Remark \bref{rem:MoreGeneralLipschitzAffine}) is related to, but different from, some known results, such as \cite[Theorem 4.15, Theorem 4.16]{Daniel1973jour},  \cite[Theorem 1]{WalkupWets1969b-jour} and \cite[Theorem 2.1]{ZhengNg2004}. All of these results are variants of the so-called Hoffman's Lemma \cite[the Theorem in Section 2]{Hoffman1952jour}. 

The differences between Lemma \bref{lem:LipschitzAffine} and the above-mentioned results are both in the formulations and in the methods of proof. 
For instance, the nature of the above-mentioned results is finite-dimensional (either the spaces are finite-dimensional or there are finite systems of equalities/inequalities) and no EGI appears (in fact, in the above-mentioned works, only in \cite[Theorem 4.15, Theorem 4.16]{Daniel1973jour} one can see the appearance of the standard generalized inverse of a matrix, but its use, in \cite[Lemma 4.1]{Daniel1973jour}, is significantly different from our use of the EGI in Lemma \bref{lem:LipschitzAffine}); in addition, in these works the spaces are always assumed to be normed spaces, although in Hoffman's paper \cite{Hoffman1952jour} one allows a more general magnitude function (which, implicitly, has to satisfy certain  relations with respect to the Euclidean norm); on the other hand, the nature of Lemma \bref{lem:LipschitzAffine} is rather general (possibly infinite-dimensional vector spaces or even spaces which are not vector spaces, magnitude functions which are much more general than a norm, and so on) and we use (and introduce) the concept of an extended generalized inverse.  

We note that \cite[Theorem 7]{Robinson1972jour} is also a general variant of Hoffman's Lemma, but its nature is different from Lemma \bref{lem:LipschitzAffine}. For instance, it is for convex processes in Banach spaces, that, when restricted to the linear case, some restrictive assumptions are imposed on the operator, unless the spaces are finite-dimensional. In addition, the method of proof in  \cite[Theorem 7]{Robinson1972jour} is significantly different from the one of Lemma \bref{lem:LipschitzAffine}, and, in particular, it does not use the notion of an  EGI (however, interestingly, it does use a certain inverse, namely an inverse of a set-valued operator). 

Anyway, since Lemma \bref{lem:LipschitzAffine} is related to the above-mentioned results, one can regard it as a very general variant of Hoffman's Lemma.
\end{remark}

\begin{exmp}\label{ex:LinearNonlinearWholeSpace}{\bf (Nonlinear function, linear constraints)}
Consider the setting of Lemma \bref{lem:LipschitzAffine}.  Assume that $f:X\to\R$ is bounded from below and is also uniformly continuous with respect to the distance induced by the ``pseudo-magnitude'' $\scS_X:X\to [0,\infty]$ (not necessarily a Minkowski functional).  Denote by $d_I$ the distance on $I$  induced by $\scS_{\wt{Y}}$. Suppose that $\scS_{\wt{Y}}(0)=0$. Suppose also that the conjugate pseudo-magnitude $\overline{\scS_{\wt{Y}}}:X\to[0,\infty]$, which is defined by $\overline{\scS_{\wt{Y}}}(x):=\scS_{\wt{Y}}(-x)$ for each $x\in X$, is continuous at 0 (the continuity is with respect to $\scS_{\wt{Y}}$).  Let $\phi_*:I\to\R$ be the optimal value function defined by $\phi_*(t):=\INF_f(A_t)$, $t\in I$, namely 
\begin{equation*}
\phi_*(t)=\inf\{f(x): x\in X,\, Lx=t\}, \quad t\in L(X).
\end{equation*} 
Let $\mathscr{C}$ be the set of all nonempty subsets of $X$. Since $f$ is bounded from below and $A_t\neq \emptyset$ for every $t\in I$, it follows that $A_t\in\dom(\INF_f)$ for each $t\in I$. Moreover, as we have shown in Lemma \bref{lem:LipschitzAffine}, there exists some real number $\alpha$, which must be nonnegative since $\scS_{\wt{Y}}$ is nonnegative (unless $\scS_{\wt{Y}}$ vanishes identically, but in this case we can obviously replace $\alpha$ by a nonnegative number), such that $D_H(A_s,A_t)\leq  \max\{\alpha\scS_{\wt{Y}}(s-t),\alpha\overline{\scS_{\wt{Y}}}(s-t)\}$ for all $s, t\in I$. Since $\overline{\scS_{\wt{Y}}}$ is continuous at 0 and $\overline{\scS_{\wt{Y}}}(0)=0=\scS_{\wt{Y}}(0)$, we have  $\lim_{t\to s}D_H(A_s,A_t)=0$ for all $s\in I$. Since the assumptions on $\scS_{\wt{Y}}$ and hence on $d_I$ imply that $\mathscr{C}$ satisfies the conditions needed in Corollary \bref{cor:A_t}\beqref{item:f_is_uniformly_continuous},  this corollary implies that $\phi_*$ is continuous. If, in addition, $f$ is $\Lambda$-Lipschitz continuous on $X$ for some $\Lambda>0$ and $\overline{\scS_{\wt{Y}}}$ is $\beta$-Lipschitz continuous with respect to $\scS_{\wt{Y}}$  for some $\beta>0$, then $\phi_*$ is $\alpha\cdot\max\{1, \beta\}\cdot \Lambda$-Lipschitz continuous, as follows from Corollary \bref{cor:A_t}\beqref{item:f_is_Lipschitz_continuous}.
\end{exmp}

\begin{remark}\label{rem:DiscontinuousAsymmetricNorm}
In connection with Example \bref{ex:LinearNonlinearWholeSpace}, it could be of interest to note that functions which are uniformly continuous with respect to a pseudo-magnitude (even with respect to a Minkowski functional) may not be uniformly continuous with respect to a norm $\|\cdot\|$. Indeed, let $X:=\R^2$ with the usual Euclidean norm $\|\cdot\|$,  and let $C:=[-2,1]\times\{0\}$. Consider the Minkowski functional $\M_C$ induced by $C$. A simple calculation shows that  $\M_C(x_1,x_2)=x_1$ if $x=(x_1,x_2)\in [0,\infty)\times\{0\}$, $\M_C(x_1,x_2)=-0.5x_1$ if $(x_1,x_2)\in (-\infty,0]\times\{0\}$ and $\M_C(x_1,x_2)=\infty$ otherwise. In particular, $\M_C(0)=0$ and $0\leq \overline{\M_C}(x)\leq 2\M_C(x)$ at any $x\in X$, and hence the conjugate $\overline{\M_C}$ (see Example \bref{ex:LinearNonlinearWholeSpace}) is continuous at 0 with respect to $\M_C$. 

Now let $f:X\to\R$ be defined for all $(x_1,x_2)\in \R^2$ by $f(x_1,x_2):=g(x_2)$, where $g:\R\to\R$ is some discontinuous function (with respect to the usual absolute value metric). It follows that $f$ is not uniformly continuous with respect to the Euclidean norm, since it is not even continuous with respect to this  norm. To see that $f$ is uniformly continuous with respect to $\M_C$, let $\epsilon>0$ be  given and let $\delta:=\epsilon$. Given any $(x_1,x_2)$ and $(y_1,y_2)$ in $X$ which satisfy $\M_C((x_1,x_2)-(y_1,y_2))<\delta$, one has, in particular,  $\M_C((x_1-y_1,x_2-y_2))<\infty$. Hence $x_2=y_2$. Since $f(x_1,x_2)=g(x_2)$ and $f(y_1,y_2)=g(y_2)$, it follows that $|f(x_1,x_2)-f(y_1,y_2)|=0<\epsilon$, and so indeed $f$ is uniformly continuous. 
\end{remark}

\begin{exmp}\label{ex:LinearNonlinearPartSpace} {\bf (Nonlinear function, mixed linear-nonlinear constraints)}
Given $n\in\N$, endow $\R^n$ with some norm $\|\cdot\|$ and suppose that $C$ is compact and convex, and has a  nonempty interior $\Int(C)$.  Assume that $f:C\to\R$ is continuous. Given some $m\in\N$, suppose that $L:\R^n\to \R^m$ is a linear operator, where the norm on $\R^m$ is $\|\cdot\|_{\R^m}$ (just an arbitrary norm). For all $t$ in the image $L(\R^n)$ of $\R^n$ by $L$, denote $E_t:=\{x\in \R^n: Lx=t\}$. Denote by $I$ the set of all $t\in L(\R^n)$ for which the following regularity condition holds: the affine subspace $E_t$ intersects $\Int(C)$, namely there is some $x\in\Int(C)$ such that $Lx=t$. Assume that $I\neq\emptyset$ (as shown in Remark \bref{rem:LinearNonlinear}\beqref{item:IisOpen} below, this latter assumption actually implies that $I$ is convex and open in $L(\R^n)$). For all $t\in I$, let $A_t:=E_t\cap C$ and let $\phi_*:I\to\R$ be the optimal value function defined by $\phi_*(t):=\INF_f(A_t)$, $t\in I$, that is,
\begin{equation*}
\phi_*(t)=\inf\{f(x): x\in C,\, Lx=t\}, \quad t\in I.
\end{equation*} 
Given $t\in I$, since $E_t\cap\Int(C)\neq\emptyset$, we have $A_t\neq\emptyset$, and since $E_t$ is closed and $C$ is compact, $A_t$ is compact. Thus the Extreme Value Theorem implies that $A_t\in\dom(\INF_f)$ for each $t\in I$. We prove below that $\lim_{t\to s}D_H(A_s,A_t)=0$ for all $s\in I$. This fact, when combined with the fact that the continuous function $f$ is actually uniformly continuous (since $C$ is compact), implies that we may use  Corollary \bref{cor:A_t}\beqref{item:f_is_uniformly_continuous} (in which the first space is $C$ with the restriction of $\|\cdot\|$ to $C$ as the distance function, $d_I$ is the distance induced on $I$ by $\|\cdot\|_{\R^m}$, and $\mathscr{C}$ is the set of all nonempty subsets of $C$), from which we conclude that $\phi_*$ is continuous. 

To see that indeed $\lim_{t\to s}D_H(A_s,A_t)=0$ for all $s\in I$, suppose to the contrary that this is not true. Then there are $s\in I$, $\epsilon>0$ and a sequence $(t_k)_{k=1}^{\infty}$ of elements in $I$  such that $\lim_{k\to\infty}t_k=s$ and $D_H(A_s,A_{t_k})\geq \epsilon$ for each $k\in\N$. Therefore either $D_{asyH}(E_{t_k}\cap C,E_s\cap C)\geq \epsilon$ for all $k\in N_1$, where $N_1\subseteq \N$ is an infinite set, or $D_{asyH}(E_s\cap C,E_{t_k}\cap C)\geq\epsilon$ for all $k\in N_2$, where $N_2\subseteq \N$ is an infinite set. 

Consider the first case. It implies that for each $k\in N_1$, there is some $x_k\in E_{t_k}\cap C$ such that $d(x_k,E_s\cap C)>0.5\epsilon$. Since the sequence $(x_k)_{k\in N_1}$ is contained in the compact set $C$, there is an infinite set $N_{11}\subseteq N_1$ and $x_{\infty}\in C$ such that $\lim_{k\in N_{11}}x_k=x_{\infty}$. Lemma \bref{lem:LipschitzAffine}, Remark \bref{rem:GenInvAsymmetricNorm}\beqref{item:sigma} and the assumption that $\lim_{k\to\infty}t_k=s$ imply that $\lim_{k\to\infty}D_H(E_{t_k},E_s)=0$, and so from \beqref{eq:D_asyH} we conclude that $\lim_{k\to\infty, k\in N_{11}}d(x_k,E_s)=0$. This fact and the continuity of the distance function imply that $d(x_{\infty},E_s)=0$. Thus (since $E_s$ is closed) $x_{\infty}\in E_s$. We conclude that $x_{\infty}\in E_s\cap C$. However, since $d(x_k,E_s\cap C)>0.5\epsilon$ for each $k\in N_1$, when we pass to the limit $k\to\infty$, $k\in N_{11}$ and use the continuity of the distance function, we have $d(x_{\infty},E_s\cap C)\geq 0.5\epsilon$, a contradiction. 

Now consider the second case which was mentioned two paragraphs earlier. It implies that for each $k\in N_2$, there is some $y_k\in E_{s}\cap C$ such that $d(y_k,E_{t_k}\cap C)>0.5\epsilon$. Since the sequence $(y_k)_{k\in N_2}$ is contained in the compact set $E_s\cap C$, there is an infinite set $N_{22}\subseteq N_2$ and $y_{\infty}\in E_s\cap C$ such that $\lim_{k\in N_{22}}y_k=y_{\infty}$. According to our assumption, $E_s\cap\Int(C)\neq\emptyset$. Let $z\in E_s\cap \Int(C)$ and $r:=\min\{0.1\epsilon,0.5\|y_{\infty}-z\|\}$. If $y_{\infty}=z$, then $r=0$. We let $w:=z$ and observe that $w\in \Int(C)$. Otherwise, let $w:=y_{\infty}+r((z-y_{\infty})/\|z-y_{\infty}\|)$. In this latter case $w$ belongs to the half-open line segment $(y_{\infty},z]$.  As is well known \cite[Theorem 6.1, p. 45]{Rockafellar1970book}, since $z\in\Int(C)$ and $y_{\infty}\in C$, the convexity of $C$ implies that  $(y_{\infty},z]$ is contained in $\Int(C)$. Thus $w\in \Int(C)$ again. Hence there is some $\rho\in (0,r)$ such that the ball of radius $\rho$ with center $w$ is contained in $C$. Moreover, $w\in E_s$ since $E_s$ is convex and $[y_{\infty},z]\subseteq E_s$. 

Lemma \bref{lem:LipschitzAffine} and Remark \bref{rem:GenInvAsymmetricNorm}\beqref{item:sigma} imply that $\lim_{k\to\infty}D_H(E_{t_k},E_s)=0$. As a result, it follows that $\lim_{k\to\infty}d(w,E_{t_k})=0$. Thus for all $k\in\N$ sufficiently large there is some $u_k\in E_{t_k}$ such that $\|w-u_k\|<\rho$. Our choice of $\rho$ implies that $u_k\in C$ and the triangle inequality implies that $\|y_{\infty}-u_k\|\leq \|y_{\infty}-w\|+\|w-u_k\|<r+\rho<2r\leq 0.2\epsilon$. Therefore 
\begin{equation}\label{eq:y_inftyEtkC}
d(y_{\infty},E_{t_k}\cap C)\leq \|y_{\infty}-u_k\|<0.25\epsilon
\end{equation}
for all $k\in\N$ sufficiently large and, in particular, for all $k\in N_{22}$ sufficiently large. On the other hand,  the fact that $\lim_{k\in N_{22}}y_k=y_{\infty}$ implies that $\|y_{\infty}-y_k\|<0.25\epsilon$ for all $k\in N_{22}$ sufficiently large. Since $d(y_k,E_{t_k}\cap C)>0.5\epsilon$ for each $k\in N_2$, the  triangle inequality implies that $0.5\epsilon<d(y_k,E_{t_k}\cap C)\leq \|y_k-y_{\infty}\|+d(y_{\infty},E_{t_k}\cap C)<0.25\epsilon+d(y_{\infty},E_{t_k}\cap C)$. Thus $0.25\epsilon<d(y_{\infty},E_{t_k}\cap C)$ for all $k\in N_{22}$ sufficiently large. This inequality contradicts \beqref{eq:y_inftyEtkC} and proves that the second case mentioned several paragraphs above cannot hold too. Thus we indeed have $\lim_{t\to s}D_H(A_s,A_t)=0$ for all $s\in I$, as asserted.
\end{exmp}

Below we collect a few remarks regarding Examples \bref{ex:LinearNonlinearWholeSpace} and \bref{ex:LinearNonlinearPartSpace}.
\begin{remark}\label{rem:LinearNonlinear}
\begin{enumerate}[(i)]
\item Examples \bref{ex:LinearNonlinearWholeSpace} and \bref{ex:LinearNonlinearPartSpace} extend partly, but significantly,  the stability theory developed in \cite[pp. 279--280]{Wets2003jour} and \cite[Theorem 2]{WalkupWets1969jour} (for a  related theory, see \cite[p. 281]{Wets2003jour} and \cite[Lemma 4.1]{RomischWets2007jour}). This theory has been applied to analyzing stochastic programs \cite[Section 4]{WalkupWets1969jour}, \cite[Section 4]{RomischWets2007jour}. The setting in \cite[pp. 279--280]{Wets2003jour} and \cite[Theorem 2]{WalkupWets1969jour} is a finite-dimensional Euclidean space, a polyhedral set $C$, and an objective function $f$ which is Lipschitz continuous on a set which contains $C$; another requirement in \cite[pp. 279--280]{Wets2003jour} is that either the level-sets of $f$ are bounded or $C^{\infty}\cap A_0=\{0\}$, where $A_0$ is the kernel of the linear operator $L$ and $C^{\infty}$ is the horizontal cone associated with $C$. It is proved in \cite{Wets2003jour} that the associated optimal value function $\phi_*$  is Lipschitz continuous under these assumptions (note: the constraint set there is written as $\{x\in C: Lx=b-t\}$, where $b$ is a given vector and $t$ is the parameter; thus by a simple change of variable we can arrive at this formulation). It can be seen that  Examples \bref{ex:LinearNonlinearWholeSpace} and \bref{ex:LinearNonlinearPartSpace} extend this theory to the case of spaces which are not necessarily normed spaces (and not necessarily finite-dimensional), a constraint set $C$ which is either the entire space or a (usually non-polyhedral) convex body, a linear operator $L$ which should have a Lipschitz continuous EGI, and an objective function $f$ which is either Lipschitz continuous or merely uniformly continuous. We are still able to derive the continuity of $\phi_*$ under these conditions, and sometimes (Example \bref{ex:LinearNonlinearWholeSpace}) its Lipschitz continuity. 

 We believe that the theory developed in this section can be extended further, and, in particular, that it is possible to remove (at least in some interesting cases) the compactness and finite-dimensionality assumptions from Example \bref{ex:LinearNonlinearPartSpace}. 

\item\label{item:IisOpen} The set $I$ mentioned in Example \bref{ex:LinearNonlinearPartSpace} is actually open and convex whenever it is nonempty. Indeed, given $s\in I$, let $x\in E_s\cap \Int(C)$. In particular, $x\in \Int(C)$ and hence there exists some $r>0$ such that the ball of radius $r$ about $x$ is contained in $\Int(C)$. Remark \bref{rem:GenInvAsymmetricNorm}\beqref{item:sigma} and  Lemma \bref{lem:LipschitzAffine} imply that there is some $\alpha>0$ such that $D_H(E_s,E_t)\leq \alpha \|s-t\|_{\R^m}$ for all $t\in L(\R^n)$. Thus, if $\delta:=r/\alpha$, then  for each $t\in L(\R^n)$ which satisfies $\|s-t\|_{\R^m}<\delta$, we have $D_H(E_s,E_t)<r$. This inequality and the fact that $x$ also belongs to $E_s$ imply that there is  some $y\in E_t$ such that $\|x-y\|<r$. Thus $y\in \Int(C)\cap E_t$ and so $t\in I$ for all $t$ in the ball of radius $\delta$ about $s$. To see that $I$ is convex, let  $s,t\in I$ and $\lambda\in [0,1]$ be given. Then $Lx_s=s$ and $Lx_t=t$ for some $x_s, x_t\in \Int(C)$. We have $\lambda x_s+(1-\lambda)x_t\in \Int(C)$ since $\Int(C)$ is convex. In addition, $L(\lambda x_s+(1-\lambda)x_t)=\lambda L(x_s)+(1-\lambda)L(x_t)=\lambda s+(1-\lambda)t$. Consequently, $\lambda s+(1-\lambda)t\in I$, as required. 
\end{enumerate}
\end{remark}

\section{Application 3: a sequence of Lipschitz constants}\label{sec:ContinuityLipCont}
In this section we use Corollary \bref{cor:A_t} in order to show that, under some assumptions, a rather general sequence of positive numbers can be a sequence of Lipschitz constants associated with a given function (each Lipschitz constant corresponds to a certain subset on which one measures the Lipschitz continuity of the function). Corollary \bref{cor:f'Lip} below has recently been applied  in the analysis of a telescopic proximal gradient method \cite{ReemReichDePierro2019teprog}. 

\begin{cor}\label{cor:f'Lip}
Suppose that $f:U\to \R$ is a twice continuously (Fr\'echet) differentiable function defined on an open and convex subset $U$  of some real normed space $(X,\|\cdot\|)$, $X\neq\{0\}$. Suppose that $C$ is a convex subset of $X$ which has the property that $C\cap U\neq\emptyset$. Assume that $f''$ is bounded and uniformly continuous on  bounded subsets of $C\cap U$. Fix an arbitrary $y_0\in C\cap U$, and let $s:=\sup\{\|f''(x)\|: x\in C\cap U\}$ and $s_0:=\|f''(y_0)\|$. If $s=\infty$, then for each strictly increasing sequence $(\lambda_k)_{k=1}^{\infty}$ of positive numbers which satisfies $\lambda_1>s_0$ and $\lim_{k\to\infty}\lambda_k=\infty$, there exists an increasing sequence $(S_k)_{k=1}^{\infty}$ of bounded and convex subsets of $C$ (and also closed if $C$ is closed) which satisfies the following properties: first, $S_k\cap U\neq\emptyset$ for all $k\in \N$, second, $\cup_{k=1}^{\infty}S_k=C$, third, for each $k\in\N$, the function $f'$ is  Lipschitz continuous on $S_k\cap U$ with $\lambda_k$ as a Lipschitz constant; moreover, if $C$ contains more than one point, then also $S_k\cap U$ contains more than one point for each $k\in\N$. Finally, if $s<\infty$, then $f'$ is Lipschitz continuous on $C\cap U$ with $s$ as a Lipschitz constant. 
\end{cor}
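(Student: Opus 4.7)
The case $s<\infty$ is immediate: since $C\cap U$ is convex and contained in the open set $U$ on which $f$ is twice Fr\'echet differentiable, the mean value inequality for Fr\'echet derivatives gives $\|f'(x)-f'(y)\|\le s\|x-y\|$ for all $x,y\in C\cap U$.

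For the main case $s=\infty$, introduce the real valued function $h:C\cap U\to\R$ defined by $h(x):=\|f''(x)\|$; for each $r\ge 0$ the set $A_r:=\{x\in C\cap U:\|x-y_0\|\le r\}$; and the function $\psi(r):=\SUP_h(A_r)$. Then $A_0=\{y_0\}$, hence $\psi(0)=s_0$; $\psi$ is nondecreasing; and $\bigcup_{r\ge 0}A_r=C\cap U$ forces $\lim_{r\to\infty}\psi(r)=s=\infty$. The plan is (i) to prove that $\psi$ is continuous on $[0,\infty)$, (ii) to use the intermediate value theorem to select strictly increasing $r_k\to\infty$ with $\psi(r_k)=\lambda_k$, (iii) to take $S_k:=\{x\in C:\|x-y_0\|\le r_k\}$ (noting $S_k\cap U=A_{r_k}$), and (iv) to conclude via the mean value inequality on the convex set $S_k\cap U\subseteq U$ that $f'$ is $\lambda_k$-Lipschitz on $S_k\cap U$.

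Continuity of $\psi$ is obtained by applying Corollary \bref{cor:A_t}\beqref{item:f_is_uniformly_continuous} with pseudo-distance spaces $(C\cap U,\|\cdot\|)$ and $([0,\infty),|\cdot|)$, objective $h$, and $\mathscr{C}$ equal to the collection of nonempty bounded subsets of $C\cap U$. Each $A_r$ belongs to $\mathscr{C}\cap\dom(\SUP_h)$ since $f''$ is bounded on bounded subsets; uniform continuity of $h$ on each member of $\mathscr{C}$ follows from that of $f''$ via the reverse triangle inequality; and the requirement that $B[A,r_A]\in\mathscr{C}$ for some $r_A>0$ holds automatically with $r_A:=1$, since balls are computed inside $C\cap U$. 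The critical ingredient is the estimate $\limsup_{r\to r_0}D_H(A_{r_0},A_r)\le 0$. I will prove the stronger bound $D_H(A_{r_0},A_r)\le|r-r_0|$ by the following convexity retraction: given $x\in A_r$, take $z:=x$ if $\|x-y_0\|\le r_0$ and $z:=y_0+(r_0/\|x-y_0\|)(x-y_0)$ otherwise. Convexity of $C$ and of $U$ together with $y_0,x\in C\cap U$ put the segment $[y_0,x]$, and hence $z$, inside $C\cap U$; evidently $\|z-y_0\|\le r_0$, so $z\in A_{r_0}$, and $\|x-z\|\le|r-r_0|$. Interchanging $r$ and $r_0$ yields the symmetric half of the Hausdorff estimate.

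With $\psi$ continuous on $[0,\infty)$, $\psi(0)=s_0<\lambda_1$, and $\lim_{r\to\infty}\psi(r)=\infty$, setting $r_k:=\sup\{r\ge 0:\psi(r)\le\lambda_k\}$ produces $r_k\in(0,\infty)$ with $\psi(r_k)=\lambda_k$; strict monotonicity of $(\lambda_k)$ and continuity of $\psi$ force $(r_k)$ strictly increasing, and $\lambda_k\to\infty$ forces $r_k\to\infty$. The set $S_k:=\{x\in C:\|x-y_0\|\le r_k\}$ is bounded, convex, and closed when $C$ is closed; moreover $S_k\cap U=A_{r_k}$, so $\sup_{S_k\cap U}\|f''\|=\lambda_k$, and the mean value inequality applied on the convex subset $S_k\cap U$ of the open set $U$ gives the $\lambda_k$-Lipschitz continuity of $f'$ on $S_k\cap U$. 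The identity $\bigcup_k S_k=C$ follows from $r_k\to\infty$, and if $C$ contains some $y_1\ne y_0$ then choosing $t>0$ so small that $t\|y_1-y_0\|<r_1$ and $y_0+t(y_1-y_0)\in U$ (possible since $U$ is open and $y_0\in U$) produces a second point of $S_1\cap U\subseteq S_k\cap U$. The main obstacle in the plan is the convexity retraction argument establishing the Hausdorff estimate; it is the only step where the convexity of both $C$ and $U$ is used in an essential way.
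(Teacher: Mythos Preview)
Your proposal is correct and follows essentially the same route as the paper: define $h=\|f''\|$, set $A_r=\{x\in C\cap U:\|x-y_0\|\le r\}$, prove $D_H(A_r,A_{r_0})\le|r-r_0|$, invoke Corollary~\bref{cor:A_t}\beqref{item:f_is_uniformly_continuous} to get continuity of $\psi=\SUP_h(A_\cdot)$, apply the intermediate value theorem to produce the radii, and finish with the mean value inequality. The only cosmetic differences are that you spell out the convexity retraction for the Hausdorff bound (which the paper dismisses as an ``immediate verification'') and that you select $r_k$ as $\sup\{r:\psi(r)\le\lambda_k\}$ rather than an arbitrary IVT preimage; both choices work and yield the required monotonicity of $(S_k)$.
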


\begin{proof}
Suppose first that $s=\infty$. Let $I:=[0,\infty)$ and let $d_I$ be the standard absolute value metric. For each $t\in I$, define $B_t$ to be the intersection of $C$ with the closed  ball of radius $t$ and center $y_0$ (here $B_0:=\{y_0\}$). Then $B_t$ is a bounded and convex subset of $C$ for each $t\in I$, and it is also closed if $C$ is closed. Let $A_t:=B_t\cap U$ for each $t\in I$. Then $A_t\neq\emptyset$ (it contains $y_0$), convex and bounded for all $t\in I$. In addition, $\cup_{t\in I}B_t=C$ and $\cup_{t\in I}A_t=C\cap U$. An immediate verification shows that $D_H(A_t,A_{t'})\leq |t-t'|$ for all $t,t'\in I$. Since $f''$ exists and is bounded on bounded subsets of $C\cap U$, the function $h$,  which is defined by  $h(x):=\|f''(x)\|$, $x\in C\cap U$, is finite at each point, and it is also bounded on each bounded subset of $C\cap U$. This implies  that the function $\phi^*:I\to (-\infty,\infty]$, which is defined by $\phi^*(t):=\SUP_h(A_t)$ for each $t\in I$, satisfies $\phi^*(t)\in [0,\infty)$ for each $t\in I$. In addition, since $f''$ is uniformly continuous on bounded subsets of $C\cap U$, the triangle inequality shows that the function $h$, too, is uniformly continuous on bounded subsets of $C\cap U$. We conclude from the  previous lines that the conditions needed in  Corollary \bref{cor:A_t}\beqref{item:f_is_uniformly_continuous} hold (the first pseudo-distance space there is $C\cap U$, where the pseudo-distance is the metric which is induced by the restriction of the norm of $X$ to $C\cap U$; in addition, the set $\mathscr{C}$ in  Corollary \bref{cor:A_t}\beqref{item:f_is_uniformly_continuous} is the set of all nonempty and bounded subsets of $C\cap U$), and consequently, $\phi^*$ is a continuous function on $I$. 

Since $s=\sup\{\|f''(x)\|: x\in C\cap U\}=\infty$, for each $\rho\geq \|f''(y_0)\|=s_0$, there exists $x\in C\cap U$ such that $\|f''(x)\|>\rho$. Since $\cup_{t\in I}A_t=C\cap U$, there exists $t(x)\in I$ such that $x\in A_{t(x)}$. As a result, from the definition of $\phi^*$ we see that $\phi^*(t(x))\geq \|f''(x)\|>\rho$. By applying the classical Intermediate Value Theorem to the continuous function $\phi^*$ on the interval $[0,t(x)]$, we conclude that each value between $\phi^*(0)=s_0$ and $\phi^*(t(x))$ is attained. In particular, $\rho$ is attained. Since $\rho$ was an arbitrary number which is greater than or equal to $s_0$ and since $\phi^*$ is increasing, it follows that the image of $I=[0,\infty)$ under $\phi^*$ is the interval $[s_0,\infty)$. Therefore, given $k\in\N$, since  $\lambda_k\geq \lambda_1>s_0$, there exists $t_k\in [0,\infty)$ such that  $\phi^*(t_k)=\lambda_k$, and  this $t_k$ must be positive, otherwise $t_k=0$ and hence $s_0=\phi^*(0)=\phi^*(t_k)=\lambda_k$, a contradiction. 

Let $S_k:=B_{t_k}$ for each $k\in \N$. Then $S_k$ is bounded and convex for each $k\in\N$, and it is also closed if $C$ is closed. In addition, $S_k\cap U$ is nonempty (it contains $y_0$), bounded and convex for every $k\in\N$. Since $\|f''(x)\|\leq \sup\{\|f''(y)\|: y\in S_k\cap U\}=\phi^*(t_k)$ for all $x\in S_k\cap U$, and since $f'$ is continuously differentiable on $U$ and hence on $S_k\cap U$, the (generalized) Mean Value Theorem applied to $f'$ (see \cite[Theorem 1.8, p. 13, and also p. 23]{AmbrosettiProdi1993book}; this theorem is formulated for G\^ateaux differentiable functions acting between real Banach spaces, but it holds as well for Fr\'echet  differentiable functions acting between real normed spaces, because no completeness assumption is needed in the proof, and the Fr\'echet and G\^ateaux derivatives coincide in our case) implies that $f'$ is Lipschitz continuous on $S_k\cap U$ with $\phi^*(t_k)$ as a Lipschitz constant, namely with $\lambda_k$ as a  Lipschitz constant. 

Now we show that $\cup_{k=1}^{\infty}S_k=C$. Indeed, since $\phi^*$ is increasing and $(\lambda_k)_{k=1}^{\infty}$ is strictly increasing, it follows that $(t_k)_{k=1}^{\infty}$ is increasing. Hence $\ell:=\lim_{k\to\infty}t_k$ exists and it must be that $\ell=\infty$, otherwise $\lambda_k=\phi^*(t_k)\leq \phi^*(\ell)<\infty$ for all $k\in\N$, a contradiction to the assumption that $\lim_{k\to\infty}\lambda_k=\infty$. Hence the union of the closed balls with common center $y_0$ and radii $t_k$, $k\in\N$, is $X$. Thus the intersection of this union with $C$ is $C$ itself. On the other hand, this intersection is $\cup_{k=1}^{\infty}S_k$, as follows from the definition of the subsets $S_k$, $k\in\N$. In other words, $\cup_{k=1}^{\infty}S_k=C$.

It remains to show that if $C$ contains more than one point, then  $S_k\cap U$ also contains more than one point for every $k\in\N$. Indeed, take some arbitrary $w_0\in C$ which satisfies $w_0\neq y_0$. The line segment $[y_0,w_0]$ is contained in $C$ because $C$ is convex. Since $U$ is open and $y_0\in U$, there is a sufficiently  small closed ball $B$ of center $y_0$ and positive radius $r<\min\{\|y_0-w_0\|,t_k\}$ such that $B\subset U$. The intersection of $B$ with $C$ contains the segment $[y_0,y_0+r\theta]$, where $\theta:=(w_0-y_0)/\|w_0-y_0\|$. Since  $r<t_k$, it follows from the definition of $S_k$ that $B\cap C\subset B_{t_k}=S_k$. Hence $S_k$ contains the nondegenerate segment $[y_0,y_0+r\theta]$, namely it contains more than one point. 

Finally, we need to consider the case where $s<\infty$. In this case $\|f''(x)\|\leq s<\infty$ for every $x\in C\cap U$. Since $C\cap U$ is convex and $f'$ is Fr\'echet (hence G\^ateaux) differentiable on $U$, the Mean  Value Theorem applied to $f'$ implies that $f'$ is Lipschitz continuous on $C\cap U$ with $s$ as a Lipschitz constant. 
\end{proof}

\section{Application 4: a general scheme for tackling a wide class of nonconvex and nonsmooth optimization problems}\label{sec:NonconvexNonsmooth}

\subsection{The method:} Given a pseudo-distance space $(X,d)$, consider the general optimization problem of minimizing (or maximizing) a given uniformly continuous function $f:X\to \R$ over a nonempty subset $A\subseteq X$. Theorem \bref{thm:HausdorffSupInf} suggests a general scheme for approximating both $\INF_f(A)$ and $\SUP_f(A)$. Indeed, consider the case of approximating $\INF_f(A)$ (the case of approximating $\SUP_f(A)$ follows a similar reasoning) and assume that it is known that $\INF_f(A)\in \R$. Assume also that we are able to approximate $A$ by a sequence $(A_k)_{k=1}^{\infty}$ of subsets of $X$ such that $\lim_{k\to\infty}D_H(A,A_k)=0$ and $\INF_f(A_k)\in\R$ for all $k\in\N$. Furthermore, assume that we are also able to compute an approximation $\tilde{\sigma}_k$ to $\INF_f(A_k)$ so that $\lim_{k\to\infty}|\tilde{\sigma}_k-\INF_f(A_k)|=0$. Then Theorem \bref{thm:HausdorffSupInf} ensures that  $\lim_{k\to\infty}\tilde{\sigma}_k=\lim_{k\to\infty}[\tilde{\sigma}_k-\INF_f(A_k)]+\lim_{k\to\infty}\INF_f(A_k)=\INF_f(A)$. Consequently, the general scheme is nothing but computing $\tilde{\sigma}_1, \tilde{\sigma}_2, \tilde{\sigma}_3, \ldots$. 

\subsection{A few remarks:}
The above-mentioned method seems to be useful in cases where $A$ itself does not have a ``finite representation'' or is not easily computable. For example, suppose that $A$ is a component (or the union of the components) of a double zone diagram induced by finitely many sites contained in a convex body in a finite-dimensional strictly convex normed space, or, more generally, in a compact geodesic metric space $X$ which has the geodesic inclusion property \cite[Definition 3.1]{Reem2018jour}. A double zone diagram is an exotic geometric object which is defined to be a fixed point of a certain operator which acts on tuples of sets. While its existence is known in general \cite[Theorem 5.5]{ReemReich2009jour} and one can even represent explicitly one of the  double zone diagrams, this representation is not finite in the sense that it based on an infinite increasing union of known ``inner tuples'' of sets: see \cite[Theorem 5.2]{Reem2018jour} (this representation was observed before in \cite[Lemma 5.1]{AsanoMatousekTokuyama2007jour} in a simpler setting). Thus, if we want to estimate the distance from a given point $q\in X$ to $A$, namely to estimate $\INF_f(A)$ for $f(x):=d(q,x)$, $x\in X$ (this is a uniformly continuous function), then we can use the above-mentioned method since it is known \cite[Corollary 5.3]{Reem2018jour} that if $A_k$ denotes a component of the ``inner tuple'' in iteration number $k$ corresponding to $A$, $k\in \N$ (or the union of the components if $A$ itself is the union of components of the double zone diagram), then $\lim_{k\to\infty}D_H(A,A_k)=0$. 

As a second example, consider the problem of minimizing a continuous  function $f$ over a finite-dimensional Euclidean space, where the constraint set $A$ is induced by a finite system of convex inequalities. Assume further that it is known that $A$ is contained in some known (and possibly large) closed ball. In this case $f$ is automatically uniformly continuous on the ball. Estimating $A$ is not always a simple task, but in \cite{ButnariuCensor1991jour} one can find a method which does exactly this. More precisely, it produces, in finitely many steps, an inner and outer polytopial  approximations $Q'_{\epsilon}\subseteq A\subseteq Q''_{\epsilon}$ to $A$ having Hausdorff distance from each other (and hence also from $A$) which is not greater than a known tolerance parameter $\epsilon$. In particular, by letting $\epsilon_k:=1/k$ and $A_k:=Q'_{\epsilon_k}$ for each $k\in\N$, we have $\lim_{k\to\infty}D_H(A_k,A)=0$, and hence we can use the above-mentioned method in order to estimate $\INF_f(A)$.

As a final remark in this section, we note that the idea of estimating the optimal value of some function over a given constraint set $A$ by estimating it over an approximating set $A_k$ and taking the limit $k\to\infty$ appears in other works, such as \cite[p. 367]{Kummer1977jour} (in a very intuitive and brief form), in  \cite{LentCensor1991jour} (the setting there is a finite-dimensional Euclidean space and the approximation is with respect to inner and outer limits of compact sets), and in \cite[Subsections 1.1 and 1.2]{Reem(FunAnalysisArXiv)2019arxiv} (in the setting of an interval/box, or, more generally, in the setting of a compact metric space). 

\section*{Acknowledgments}
Part of the work of the first author was done when he was at the Institute of Mathematical and Computer Sciences (ICMC), University of S\~ao Paulo,  S\~ao Carlos, Brazil (2014--2016), and was supported by FAPESP 2013/19504-9.  The second author was partially supported by the Israel Science Foundation (Grants 389/12 and 820/17), by the Fund for the Promotion of Research at the Technion and by the Technion General Research Fund.  The third author thanks CNPq  grant 306030/2014-4 and FAPESP 2013/19504-9. 
All the authors would like to express their thanks to Yair Censor for  helpful discussions related to \cite{LentCensor1991jour} and to all the people who have provided us with anonymous feedback.  

\bibliographystyle{acm}
\bibliography{biblio}

\end{document}